\newtheorem*{teo}{Theorem}
\theoremstyle{plain}
\newtheorem{theorem}{Theorem}[section]
\newtheorem{cor}[theorem]{Corollary}
\newtheorem{prop}[theorem]{Proposition}
\newtheorem{lemma}[theorem]{Lemma}
\newtheorem*{claim}{Claim}
\newenvironment{claimproof}{\noindent\textit{Proof of
		Claim.}}{\hfill\qedsymbol \tiny{Claim}
	\medskip}
\theoremstyle{definition}
\newtheorem{remark}[theorem]{Remark}
\newtheorem{fact}[theorem]{Fact}
\newtheorem{definition}[theorem]{Definition}
\newtheorem*{notation}{Notation}
\newtheorem*{conj}{Conjecture}
\newcommand{\nc}{\newcommand}
\nc{\Z}{\mathbb{Z}}
\nc{\Q}{\mathbb{Q}}
\nc{\N}{\mathbb{N}}
\nc{\F}{\mathbb{F}}
\nc{\UU}{\mathbb{U}}
\nc{\C}{\mathbb{C}}
\nc{\M}{\mathcal{M}}
\nc\LL{\mathcal L}
\nc\II{\mathcal I}
\nc{\stt}{\operatorname{St}}
\nc{\stab}{\operatorname{Stab}}
\nc{\GO}[1]{G_{#1}^{00}}
\nc{\band}[1]{\bar d_{\mathcal{#1}}}
\nc\BD{\operatorname{BD}}
\nc{\dcl}{\operatorname{dcl}}
\nc{\dclq}{\operatorname{acl^\text{eq}}}
\nc{\acl}{\operatorname{acl}}
\nc{\aclq}{\operatorname{acl^\text{eq}}}
\nc{\nf}[1]{_{\mid {#1}}}
\nc{\restr}[1]{\xspace_{\upharpoonright {#1}}}
\nc{\sbgp}[1]{\langle\xspace {#1}\xspace\rangle}
\nc\CAN{\operatorname{CB}}
\nc\inv{ ^{-1}}
\nc{\tp}{\operatorname{tp}}
\nc\cb{\operatorname{Cb}}
\nc\U{\operatorname{U}}
\nc{\cf}{\text{cf.\,}}
\nc{\eg}{\text{e.g. }}
\def\Ind#1#2{#1\setbox0=\hbox{$#1x$}\kern\wd0\hbox to
	0pt{\hss$#1\mid$\hss} \lower.9\ht0\hbox to
	0pt{\hss$#1\smile$\hss}\kern\wd0}
\def\Notind#1#2{#1\setbox0=\hbox{$#1x$}\kern\wd0\hbox to
	0pt{\mathchardef\nn="0236\hss$#1\nn$\kern1.4\wd0\hss}\hbox to
	0pt{\hss$#1\mid$\hss}\lower.9\ht0 \hbox to
	0pt{\hss$#1\smile$\hss}\kern\wd0}
\def\indip{\mathop{\ \ \hbox to 0pt{\hss$\mid^{\hbox to
				0pt{$\scriptstyle P$\hss}}$\hss}
		\lower4pt\hbox to 0pt{\hss$\smile$\hss}\ \ }}
\def\nindip{\mathop{\ \ \hbox to 0pt{\hss$\!\not{\mid}^{\hbox to
				0pt{$\scriptstyle\, P$\hss}}$\hss}
		\lower4pt\hbox to 0pt{\hss$\smile$\hss}\ \ }}
\begin{document}

	\title{Stabilizers, Measures and IP-sets}
	\date{\today}
	
	\author{Amador Martin-Pizarro and Daniel Palac\'in}
	\address{Abteilung f\"ur Mathematische Logik, Mathematisches Institut,
		Albert-Ludwig-Universit\"at Freiburg, Ernst-Zermelo-Stra\ss e 1, D-79104
		Freiburg, Germany}
	 	\email{pizarro@math.uni-freiburg.de}

 \address{Departamento de \'Algebra, Geometr\'ia y Topolog\'ia; Facultad de Matem\'aticas;
	Universidad Complutense de Madrid; 28040 Madrid, Spain}

	\email{dpalacin@ucm.es}
	
	\thanks{Research supported by MTM2017-86777-P as well as by the Deutsche
		Forschungsgemeinschaft (DFG, German Research Foundation) - 
		Project number 2100310201 and 2100310301, part of the ANR-DFG 
		program GeoMod}
	\keywords{Model Theory, Additive Combinatorics, Ultrafilters, IP-sets}
	\subjclass{03C13, 03C45, 11B30}
	
	\begin{abstract}
		The purpose of this article is to provide elementary model-theoretic proofs
		to some existing results on sumset phenomena and IP sets, motivated by
		Hrushovski's
		work on the stabilizer theorem.
	\end{abstract}
	
	\maketitle

\section*{Introduction}

A subset of the integers is called an \emph{IP set} (this stands for
\emph{infinite-dimensional parallele\-piped}) if it contains all
finite
sums of elements (without repetitions) of an infinite subset.  More
gene\-rally, a
subset $A$ of an infinite group $G$ (possibly non-abelian) is an
{\em IP set}  if there exists an enumeration $(x_i)_{i\in \mathbb
N}$ of an infinite subset $B$ of $A$  such that $A$ contains
\[
\mathrm{FP}(B)  = \{ x_{i_1}\cdots x_{i_k} \ |  \ i_1< \ldots< i_k
\}_{k\in \mathbb N}.
%\text{ for } k\in\mathbb N \}.
\]
Hindman's influential theorem \cite{nH74} states that for any
finite coloring
on the na\-tural numbers, there is a monochromatic IP set.
Hindman's original
proof was elementary, yet rather technical. Galvin and Glazer gave a beautiful
proof (see the account in \cite{wC77}) using the topology of the Stone space of
ultrafilters on $\N$ as well as Ellis-Numakura's lemma on the existence of idempotent
ultrafilters  \cite{kN52, rE58}. Every IP subset $A$ of $G$ has the \emph{productset
property}, that is, there are two infinite subsets
$B$ and $C$ of $G$ such that \[ B\cdot C=\{b\cdot c\ |\ b\in B, c\in C
\}\subseteq A,\]

\noindent which is sometimes referred to as the \emph{sumset property}
when the group $G$ is abelian.  Nathan\-son \cite{mN80}
attributes
to Erd\H{o}s the  so-called
\emph{$B+C$ conjecture}, which was stated by Erd\H{o}s and
Graham
in \cite{EG80} as an  \emph{old problem}:
\begin{conj}
Every subset $A$ of
$\mathbb N$ of positive lower density, that is,
\[
  \liminf\limits_{n\to \infty} \frac{\big|A \cap
    [1,n]\big|}{n}>0,\] \noindent must have the sumset property.
\end{conj}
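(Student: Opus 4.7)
The plan is to move the problem into a sufficiently saturated ambient structure, where the lower density of $A$ becomes the positivity of a left-invariant Keisler measure. Concretely, I would pass to a nonstandard enlargement of $\Z$ (or realize $\Z$ as a definable sub-semigroup of a saturated model), and associate to the standard lower density a finitely additive, translation-invariant measure $\mu$ on the Boolean algebra of internal subsets such that the nonstandard extension $A^*$ satisfies $\mu(A^*)>0$. This is the standard way to trade a combinatorial density hypothesis for a model-theoretic positive-measure hypothesis.

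Next, I would exploit a Hrushovski-style stabilizer theorem for measures in this setting. The idea is to take a global measure $\mu$-wide type $p$ concentrating on $A^*$, form the (left) stabilizer $S$ of $p$, and show that $S$ is a type-definable subgroup such that $p$ is $S$-invariant and $A^*$ meets almost every coset of $S$ in a $\mu$-positive set. The symmetry between $p$ and its stabilizer then lets one translate: there should exist a coset $gS$ with $\mu(A^*\cap gS)>0$, which effectively replaces the ambient group by a type-definable subgroup well-adapted to $A$.

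The extraction of $B$ and $C$ would then follow the Galvin--Glazer idempotent-ultrafilter template, but carried out inside $S$ and relative to $p$. One builds a sequence $b_1,b_2,\ldots$ realizing $p$ over increasing parameter sets, using at each stage that $p$ is its own left-stabilizer to force all finite sums $b_{i_1}+\cdots+b_{i_k}$ to lie in a prescribed translate $-c+A^*$; then $c$ together with the chosen $b_i$ yields $B,C\subseteq \Z$ (after pulling back from the nonstandard model via a standard-part or compactness argument) with $B+C\subseteq A$. At this step one uses that, in the saturated setting, the recursive realization of types up to countable length produces an actual infinite standard sequence.

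The main obstacle I expect is precisely this last transfer: the stabilizer machinery produces elements in the saturated model, but the conjecture demands genuine infinite subsets $B,C\subseteq\N$. Overcoming it requires arranging that the internal sequence of realizations has a standard trace---either by building the $b_i$ so that each lies in $\N$ by a suitable overspill/diagonal argument, or by using countable saturation to realize an omega-sequence of types whose projections back to $\N$ remain infinite. Getting the interaction between the Keisler measure, the type-definable stabilizer, and the standard combinatorics right is where the real work will lie; everything else is bookkeeping inherited from Hrushovski's stabilizer framework and from the classical ultrafilter proof of Hindman's theorem.
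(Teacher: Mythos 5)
The statement you set out to prove is presented in the paper only as a \emph{conjecture}, attributed to Erd\H{o}s; the paper does not prove it, and explicitly notes that a stronger version was resolved in \cite{MRR19} using ergodic theory together with ultrafilters. So there is no proof in the paper to compare against; your proposal has to stand on its own, and it has a genuine gap.

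The critical flaw is the step where you ``use that $p$ is its own left-stabilizer to force all finite sums $b_{i_1}+\cdots+b_{i_k}$ to lie in a prescribed translate.'' In the stabilizer framework of this paper, a wide type $p$ over $M$ satisfies $p\subseteq\stt(p)$ if and only if $p$ is not product-free (Lemma~\ref{L:prod_free}), and by Corollary~\ref{C:St-prod_free} this holds for every wide type over $M$ if and only if $G=\GO M$. For $G=\Z$ this fails badly: $\Z$ has the finite-index subgroups $n\Z$, so $\GO M$ is a proper type-definable subgroup, and there are wide product-free sets (the odd integers have lower density $\tfrac12$ and are sumfree). Thus positive density gives you a wide type $p$, but $\stab(p)=\GO M$ is a proper subgroup which in general does \emph{not} contain $p$, and the self-invariance you are counting on to generate the recursive IP-style construction simply is not available. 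The ``obstacle'' you flag --- transferring realizations in the saturated model back to $\N$ --- is comparatively routine (the paper does exactly this in Proposition~\ref{P:BD_stable} and Theorem~\ref{T:IP_wide}); the real obstacle is upstream, in the absence of the self-stabilizing property.

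What the stabilizer machinery \emph{does} yield unconditionally for amenable groups is Proposition~\ref{P:Nathanson}: every set of positive upper density contains $F_1\cdots F_n\cdot B$ with the $F_i$ finite of any prescribed size and $B$ again of positive upper density. That result has only one infinite ``factor,'' and the upgrade to two genuinely infinite sets $B+C\subseteq A$ is precisely the content of the Erd\H{o}s conjecture. Closing that gap required the ergodic-theoretic input of \cite{MRR19}, which does not reduce to Hrushovski's stabilizer theorem. If you want to use the machinery of this paper to get a full $B+C$ conclusion, you must either impose stability on $A$ (Proposition~\ref{P:BD_stable}) or impose the no-product-free-type/$G=\GO M$ hypothesis (Theorem~\ref{T:IP_wide}), neither of which follows from positive lower density in $\Z$.
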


A stronger version of the conjecture has
been recently positively answered in work of Moreira, Richter and Robertson \cite{MRR19} using ergodic
theory as well as
ultrafilters, which also played a fundamental role in the paper of Di Nasso et al. \cite{DGJLLM15}. In the
latter,  an instance of the above conjecture was solved for subsets $A$ of
Banach density \[ \BD(A)=\lim\limits_{n\to \infty}\sup\limits_{m \in \N}
\frac{|A \cap [m+1,m+n]|}{n}>\frac{1}{2}.\]

Di Nasso showed that Erd\H{o}s's $B+C$ conjecture can be phrased in
terms of \emph{independent} sums of ultrafilters (see \cite[Lemma
5.1]{MRR19}). This reformulation carries over to coheir sum of types
(that is, ultrafilters on the boolean algebra of definable sets), as
proved in  \cite[Theorem 3.4]{ACG19} by Andrews, Conant and Goldbring. For a group definable in a
stable theory, the principal generic type is indeed idempotent with
respect to the coheir (or independent) product.  Outside of stability,
there need not be a single principal generic type, yet in simple
theories a partial result was obtained by Pillay, Scanlon and Wagner in \cite[Proposition
2.2]{PSW98}: Given three principal generic (complete) types $p$, $q$ and $r$ over
an elementary substructure $M$, we have that there are independent
realizations $a$ of $p$ and $b$ of $q$ with $a\cdot b$ realizing $r$,
or equivalently, using the notation of section \ref{S:Eq}, the
type-definable set $r \ \cap (p\ast q)$ is not empty. Partial types $\Sigma$ with
$\Sigma \cap (\Sigma\inv\ast \Sigma)\neq\emptyset$ are not product-free (see Lemma \ref{L:prod_free}), a notion which was already considered in the finitary context by Gowers \cite{wG08} in order to give a negative
answer to a question of Babai and So\'s \cite{BS85} on the existence
of large product-free sets in finite groups. The second author
recently provided \cite{dP19} a model-theoretic proof of Gowers's
result, relating the existence of large product-free sets to
Hrushovski's \emph{stabilizer theorem} \cite{eH12}.

Hrushovski's stabilizer theorem allows one to generalise some of the
techniques from geometric stability theory to an arbitrary context,
replacing in a clever way the role of forking independence and
genericity by the ideal of null-sets with respect to a nonstandard
counting measure. A fundamental notion present in Hrushovski's
original proof is equationality, as introduced by Srour \cite{gS88} in
order to develop a local approach to noetherianity for the category of
definable sets in any structure. However, Hrushovski's proof does not
use the full strength of equationality. In section \ref{S:Eq}, we take
the opportunity to further explore the role of equationality in some
consequences of the stabilizer theorem, and relate it to a local
version for large product-free sets and type-definable connected components, proving the following result:

\begin{teo}[Corollary \ref{C:St-prod_free}]\label{T:MainTeo}
Let $G$ be a definable group with an $\emptyset$-type-definable  left-translation ideal $\II$ of definable subsets of $G$. Assume that $\II$ is indiscernibly prime (see Definition \ref{D:S1}). The following conditions are equivalent for every elementary submodel $M$:
\begin{itemize}
    \item If a partial type $\Sigma$ over $M$ is wide (\it{i.e.} avoids all definable sets from $\II$), then $\Sigma$ is product-free, that is, the partial type $\Sigma\cup \Sigma\cdot \Sigma\inv$ is inconsistent. 
    \item The group $G$ equals the connected component $\GO M$, where $\GO M$ denotes the intersection of all type-definable subgroups over $M$ of bounded index in $G$.
\end{itemize}
Moreover, if the second condition holds for some elementary submodel $M$, then it holds for every elementary submodel.
\end{teo}

The results from section \ref{S:Eq} will then be
applied in section \ref{S:IP} to study the asymptotics of IP sets as
well as of product-free sets. In particular, we obtain in Proposition
\ref{P:Nathanson} an elementary model-theoretic proof of an existing result
of Nathanson \cite[Theorem 6]{mN80}: every subset $A$ of the integers of
positive Banach density contains, for every two natural numbers $n$
and $m$, a subset of the form $F_1+ \cdots+ F_n + B$, for some finite
subsets $F_1,\ldots, F_n$ of $A$ of size $m$ and a subset $B$ of $A$
of positive Banach density.  In the last part of this note (Corollary \ref{C:quasi}),  we obtain a
non-quantitative finitary version of a result of Gowers \cite{wG08}
(in a weaker form). The infinite version of it (Corollary \ref{C:ultra}) 
resonates to a certain extent with previous work of Bergelson and Tao
\cite{BT14} on ultra quasirandom groups. We believe that this hints
to further connections, yet to be explored, between the techniques
used in the stabilizer theorem and the combinatorial approach to
complete $n$-amalgamation in terms of mixing and multiple
recurrence. The research we conducted here has been further developed in a subsequent article \cite{MPP24} for a wider class of groups (including pseudo-finite groups) to treat the case of complete $3$-amalgamation, which yields as a by-product a model-theoretic proof of Roth's theorem on the existence of arithmetic progressions of length 3 for subsets of positive density. 

\section{Keisler Measures and Ideals}\label{S:S1}

 Most of the material in this section can be found in \cite{eH12}.
 We  work
 inside a sufficiently saturated model $\mathbb U$ of a complete first-order
 theory (with infinite models) in a language $\LL$. All sets and
 tuples are
 taken inside $\mathbb U$.

A \emph{Keisler measure} $\mu$ is a finitely additive probability measure on
some boolean
algebra of definable subsets of the ambient model \cite{jK87}. Archetypal
examples are
measures $\mu_p$ with two possible values $0$ and
$1$ given by global
types $p$ in $S(\mathbb U)$, that is, for every definable set $X$,
\[ \mu_p(X)=1 \Leftrightarrow p \in [X],\]
where $[X]$ is the clopen space in $S(\mathbb U)$ determined by the definable
set $X$.

Given a Keisler measure $\mu$, the collection of definable sets of
measure zero forms an {\em ideal}, that is, it is closed under
subsets and finite unions. An ideal $\II$ of definable sets is
\emph{invariant} over the parameter set $A$ (or \emph{$A$-invariant}) if it is
invariant under the action
of $\mathrm{Aut}_A(\mathbb U)$. The ideal  $\II$ is
\emph{type-definable} over the parameter set $A$ if for every formula
$\varphi(x,y)$ there is a partial type $\Sigma_\varphi(y)$ over $A$ such that for
all $b$ in $\mathbb U^{|y|}$ the following equivalence holds:
\[ \Sigma_\varphi(b) \ \Leftrightarrow \ \varphi(x,b) \in \II.\]

A Keisler measure $\mu$ is said to be \emph{definable} over $A$
(see \cite[Definition 3.19]{sS17}) if for every
$\LL$-formula $\varphi(x,y)$ the following two properties hold:
\begin{itemize}
	\item the set of parameters $b$ in $\UU^{|y|}$ with $\varphi(x,b)$ measurable is definable by an $\LL_A$-formula $\rho(y)$.
	\item for every $\epsilon>0$, there is a partition of
	$\rho(\UU)$ into $\LL_A$-formulae $\rho_1(y),\ldots, \rho_m(y)$ such that for
	all pairs
	$(b, b')$  realizing $\rho_i(y)\land\rho_i(z)$, we have that
	\[ |\mu(\varphi(x,b)) -\mu(\varphi(x,b') )|<\epsilon. \]
\end{itemize}

Every finitely additive probability measure $\mu$ on all subsets admits an expansion of the original language $\LL$ in which it becomes definable without parameters, see for example \cite[Section 2.6]{eH12}. Namely, add a predicate $Q_{r,\varphi} (y)$ for each $r$ in $\mathbb Q \cap [0,1]$ and every formula  $\varphi(x,y)$ in $\LL$
such that $Q_{r,\varphi}(b)$ holds if and only if $\mu(\varphi(\mathbb U,b)) \le r$. These predicates $Q_{r,\varphi}$  give rise to new definable sets, which will also be measurable. Iterating this process countably many times and replacing the ambient model (if necessary), we obtain an expansion of the language $\LL$ such that the corresponding Keisler measure given by $\mu$ is definable.

Note that for an $A$-definable Keisler measure
$\mu$,  the ideal of sets of measure zero (called the
null-ideal of $\mu$) is type-definable
over $A$ and thus $A$-invariant.

\begin{remark}\label{R:wide_down_up}

	Let $\II$ be a type-definable ideal over an elementary submodel $M$. If
	$\varphi(x, b)$ does not belong to $\II$ for some $b$ in $\mathbb
	U$, then there is some $\mathcal L_M$-formula $\theta(y)$ such that
	$\varphi(x, c)$ does not belong to $\II$, whenever $c$ realizes
	$\theta(y)$.	In particular, there is some $m$ in $M$ such that
	$\varphi(x, m)$ does not belong to $\II$.
\end{remark}

\begin{definition}\label{D:S1}
An $\emptyset$-invariant ideal $\II$ of definable sets is \emph{indiscernibly
prime (or has the $S1$-property)}
if $\varphi(x, b_1)$ belongs to $\II$, whenever
$(b_i)_{i\in\mathbb N}$ is an indiscernible sequence such that the
conjunction
$\varphi(x, b_1) \land \varphi(x, b_2) $ lies in $\II$.
\end{definition}

 By invariance of $\II$,
we can replace the roles of $b_1$ and $b_2$ above by any two
$b_i$ and $b_j$
with $i\neq j$. The null-ideal of a definable Keisler measure is
indiscernibly
prime, since the measure of the whole space is bounded above by $1$.

A partial type is said to be \emph{wide} (with
respect to $\II$) if it contains no definable set in $\II$. The
definable set $X$ is wide if the partial type $X(x)$ is.  Every wide partial
type $\pi(x)$ over a
parameter set $A$
can be completed to a wide complete type over any arbitrary set
$B$
containing $A$, since the collection of formulae
\[ \pi(x)\cup\{\neg\varphi(x) \,|\, \varphi(x) \ \text{$\mathcal L_B$-formula}
\text{ in }
 \II \}\]
is finitely consistent.

\begin{remark}\label{R:wide_infinite}
		Let $\II$ be an indiscernibly prime $\emptyset$-invariant ideal containing
all the formulas of the form $x = a$ with $a$ in $\acl(\emptyset)$. Every wide definable set $X$ is infinite. In fact, the
		set $X$ does not fork over $\emptyset$ \cite[Lemma 2.9]{eH12}.
\end{remark}

In the presence of an ambient (type-)definable group $G$, it is meaningful to
require that the ideal $\II$ is invariant under left translation by elements
of the group. Note that natural action of $G$ on its definable sets induces an
action of $G$ on the set of (partial) types. We will identify a partial type
with the subset of its realizations in the structure $\UU$.

From now on, assume that the $\emptyset$-invariant
indiscernibly prime ideal $\II$ of $G$ is closed under left translations. Given
a partial type $\pi(x)$ over $A$, define the set
\[
\stt(\pi)=\{g\in G\,|\, \pi(x)\cup g\cdot \pi(x) \text{ is wide}\}
\]
\noindent and its \emph{stabilizer} $\stab(\pi)$  as the subgroup of $G$
generated by $\stt(\pi)$. Clearly, the
set $\stt(\pi)$ is not empty if and only if $\pi$ is wide itself. Since $\II$
is closed under left translations, the set $\stt(\pi)$ is closed
under taking
inverses in $G$. As $\stt(\pi)$ is contained in the set $\pi \cdot
\pi\inv$,  it follows that
\[ \stt(\pi)\subseteq \stab(\pi)=\bigcup\limits_{n\in \N}
\underbrace{\stt(\pi)\cdots \stt(\pi)}_{n} \subseteq \langle \pi \rangle_G,\]
\noindent whenever $\pi$ is wide, where $\langle \pi \rangle_G$ is the subgroup
of $G=G(\UU)$ generated by $\pi(\UU)$.

For our purposes, let us now state a summarized version Hrushovski's stabilizer theorem.

\begin{fact}(\cite[Theorem 3.5]{eH12} \& \cite[Theorem
2.12]{MOS18})\label{F:Hr}
Let $G$ be a definable group and $\II$ an $\emptyset$-invariant indiscernibly
prime ideal on the boolean algebra of all definable subsets of $G$
such that $\II$ is
translation-invariant under left multiplication. Given a wide complete
type $p$ over an elementary submodel $M$, the
subgroup $\stab(p)$ is  type-definable and wide with
\[\stab(p) = \stt(p)^2=(p\cdot p\inv)^2.\] Moreover, every wide type over $M$ in
$\stab(p)$ belongs to $\stt(p)$ and $\stab(p)$ equals $\GO M$, the intersection
of all subgroups of
bounded index in $G$ which are type-definable over $M$.
\end{fact}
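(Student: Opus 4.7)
My plan is to follow Hrushovski's original strategy, introducing a notion of wide-independence relative to $\II$ that plays the role coheirs and forking-independence play in the stable setting, and then extracting from the indiscernibly-prime hypothesis on $\II$ a Ramsey-style chain condition (the so-called $S1$-property). First, say that $a$ is \emph{wide over} a parameter set $B\supseteq M$ when $\tp(a/B)$ is wide; such a wide extension always exists thanks to Remark \ref{R:wide_down_up} and the finite-consistency argument preceding it. Unpacking the definition, $g\in\stt(p)$ exactly when one can write $g=a b\inv$ with $a,b\models p$ and $\tp(a/Mb)$ wide, which gives the inclusion $\stt(p)\subseteq p\cdot p\inv$ immediately; closure of $\stt(p)$ under inverses follows from left-translation invariance of $\II$ applied to $b$ chosen wide over $Ma$.

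The heart of the proof is to convert the indiscernibly-prime hypothesis into the following chain condition: if $\varphi(x,y)$ is a formula and $(b_i)_{i<\omega}$ is an $M$-indiscernible sequence with $\varphi(x,b_i)\wedge \varphi(x,b_j)\in\II$ for every $i\neq j$, then every $\varphi(x,b_i)$ lies in $\II$. From this I would derive an absorption statement: if $a\models p$ is wide over $Mb$ and $a'\models p$ is wide over $Mab$, then $aa'\inv\in\stt(p)$; iterating with a Morley-style indiscernible sequence of wide realizations of $p$ over $M$, any product $g_1 g_2\inv g_3$ of three elements of $\stt(p)$ lies in $\stt(p)^2$. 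This collapses the ascending chain $\stt(p)\subseteq \stt(p)^2\subseteq\stt(p)^3\subseteq\cdots$ at level two, giving $\stab(p)=\stt(p)^2=(p\cdot p\inv)^2$.

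With $\stab(p)$ now identified as a subgroup, its type-definability comes from the fact that non-membership in $\stt(p)^2$ is witnessed by a formula $\varphi(x,c)\in\II$, and the type-definable character of $\II$ turns this into a $\bigwedge$-definable condition. Wideness of $\stab(p)$ is inherited from $p$. For $\stab(p)=\GO M$: on one hand, $\stab(p)$ is a wide type-definable $M$-subgroup of $G$ whose index must be bounded, since pairwise disjoint wide translates would contradict the $S1$-chain condition applied to indiscernible coset representatives, so $\GO M\subseteq\stab(p)$. On the other hand, if $H\le G$ is any type-definable $M$-subgroup of bounded index, then $H$ meets $p$ in a wide subtype, and the absorption argument forces $\stt(p)\subseteq H$, hence $\stab(p)\subseteq H$. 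The ``moreover'' clause on wide $M$-types in $\stab(p)$ is a further application of the same absorption.

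The step I expect to be the main obstacle is precisely the three-fold absorption: showing that $\stt(p)\cdot\stt(p)\inv\cdot\stt(p)\subseteq\stt(p)^2$. This is where one must carefully align wide extensions along an $M$-indiscernible sequence and apply the $S1$-property at exactly the right place; it is the step in which the indiscernibly-prime hypothesis on $\II$ carries the weight that symmetry of forking would carry in the simple case, and getting the indiscernibility and the parameters to cooperate is the delicate point of the whole argument.
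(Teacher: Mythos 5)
This statement is quoted in the paper as a Fact, with the proof delegated to \cite[Theorem 3.5]{eH12} and \cite[Theorem 2.12]{MOS18}; the paper itself proves nothing here, so your proposal has to stand on its own. As it stands it is an outline of Hrushovski's strategy rather than a proof, and the outline defers exactly the step that carries all the weight. You state an ``absorption'' claim (wide realizations $a,a'$ of $p$ over suitable parameter sets give an element of $\stt(p)$) and the collapse $\stt(p)\cdot\stt(p)\inv\cdot\stt(p)\subseteq\stt(p)^2$, and you yourself flag this as ``the main obstacle'' without supplying an argument. But this is the entire content of the stabilizer theorem: the $S1$-property alone does not hand it to you by ``aligning wide extensions along an indiscernible sequence''; in Hrushovski's proof it rests on the equationality (hence stability) of the relation ``$\Phi(x,a)\cup\Psi(x,b)$ is not wide'' (the paper's Fact \ref{F:Hr_eq}, i.e.\ \cite[Lemma 2.10]{eH12}), which is what lets one test membership in $\stt(p)$ on a single well-chosen product and propagate it to all of $r\ast s$ (cf.\ Corollary \ref{C:eq_st} and the Claim inside the proof of Lemma \ref{L:prod_free}, which is a miniature of this mechanism). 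Without proving that lemma, or an equivalent substitute, nothing after your second paragraph is established: the group law for $\stt(p)^2$, the wideness of $\stab(p)$, and the ``moreover'' clause about wide types in $\stab(p)$ all depend on it. You should also be careful about sides: with an ideal invariant only under \emph{left} translation, the absorption one actually gets is of the form $p\inv\ast p\subseteq\stt(p)$ (elements $b\inv\cdot c$ with $c$ wide over $Mb$), and your $a\cdot a'^{\,-1}$ version is not interchangeable with it without an argument.

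Two further points are genuinely wrong as written. First, you derive type-definability of $\stab(p)$ from ``the type-definable character of $\II$'', but type-definability of $\II$ is \emph{not} a hypothesis of this Fact (only $\emptyset$-invariance, the $S1$-property and left-translation invariance are assumed; the paper is careful to add type-definability as an extra hypothesis only where needed, e.g.\ in Corollary \ref{C:St-prod_free}(4)). The correct source of type-definability is the equality $\stab(p)=(p\cdot p\inv)^2$ itself: $c\in p\cdot p\inv$ iff the partial type $p(y)\cup p(c\cdot y)$ is consistent, which by compactness is a conjunction of $M$-conditions on $c$, and similarly for the square. Second, in the direction $\stab(p)\subseteq\GO M$ your claim that a bounded-index $M$-type-definable subgroup $H$ ``meets $p$ in a wide subtype'' is false in general: since $M$ is a model, all realizations of $p$ lie in a \emph{single} coset $gH$, which may well be a proper coset disjoint from $H$. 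One instead argues $\stt(p)\subseteq p\cdot p\inv\subseteq gHg\inv$ and uses normality of $\GO M$ (or, as in \cite{MOS18}, deduces the equality from the statement that every wide $M$-type in $\stab(p)$ lies in $\stt(p)$). The bounded-index argument for $\GO M\subseteq\stab(p)$ via pairwise disjoint translates of a wide set and the $S1$-property is fine, but the converse inclusion needs repair, and above all the central absorption lemma needs an actual proof.
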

The above result of Hrushovski generalises a well-known situation for stable
groups. Recall that a relation $R(x,y)$ is \emph{stable} with respect to
the
partition of the variables into the tuples $x$ and $y$ if there is no sequence
$(a_i, b_i)_{i\in\N}$ in $\UU$  such that $R(a_i,b_j)$
holds if and only if  $i\leq j$. A formula $\varphi(x,y)$ is stable if the
induced relation is.

Stable formulae are closed under boolean combinations. We will now introduce some notions and properties arising from the local treatment of stability done by Hrushovski and Pillay in \cite{HP94} (see also \cite{aP86}).  

Given a formula $\varphi(x,y)$, a set $X$ is
\emph{$\varphi$-definable} over a subset $A$ of parameters if it is
definable by a boolean combination of instances $\varphi(x,a)$ with $a$ in
$A$. By a \emph{$\varphi$-type} over a subset $A$ we mean a maximal finitely
consistent collection of instances of the form $\varphi(x,a)$ or
$\neg\varphi(x,a')$ for $a$ and $a'$ in $A$.

We say that a definable set $X$ is left-\emph{generic} if finitely
many left translates of $X$ cover $G$. A partial type is
left-\emph{generic} if it only contains formulae which are
left-generic.  If $\varphi(x,y)$ is stable, then left-generic
$\varphi$-types exist \cite[Lemma
5.16(i)]{HP94}. %, and they are right-generic as well.

Whenever the formula $\varphi(x,y)$ is stable, every $\varphi$-type $p(x)$ over an elementary
submodel $M$ is \emph{definable}, that is, there is a formula $\theta(y)$ with
parameters over $M$ such that for all $m$ in $M$ \[ \varphi(x,m)
\in p \ \Longleftrightarrow \
\theta(m).\]   Furthermore, the definable set $\theta(y)$
above is unique and can be defined by a positive boolean
combination of instances $\varphi(a,y)$ with parameters in $M$ (cf. \cite[Lemma
5.4]{HP94}). We refer to this definable set as the \emph{$\varphi$-definition}
$(d_p\varphi)(y)$ of $p$.  Given a superset $B\supseteq M$ of $\UU$, there is a
unique $\varphi$-type over $B$ extending $p$ which is again definable over $M$,
namely \[ \{\varphi(x,b) \,|\, (d_p\varphi)(b)\}\cup \{\neg\varphi(x,b') \,|\,
\neg(d_p\varphi)(b')\}.\]
We refer to this type as the \emph{non-forking extension} $p\nf B(x)$
of $p(x)$ to
$B$. The \emph{global non-forking extension} of $p$ is the $\varphi$-type $p\nf
\UU$.

Whenever $\varphi(x,y)$ is {\em equivariant} (see \cite[Definition 5.13]{HP94}), that is, every left-
translate of an instance of $\varphi$ is again an instance of $\varphi$, the group $G$ acts naturally on the space of global $\varphi$-types by left multiplication, so given
a $\varphi$-type $p(x)$ over the elementary submodel $M$, it makes sense to consider its $\varphi$-\emph{stabilizer} to be the group-theoretic stabilizer of the global extension $p\nf
\UU$ with respect to this action. Note that 
\[\stab_\varphi(p)= \big\{g\in G\,|\, \forall u \big(
(d_p\varphi)(u)
\leftrightarrow (d_p\varphi)(u\cdot g) \big)   \big\} \]
 is clearly a definable subgroup of $G$ with parameters from $M$. However, this group need not be definable using instances of $\varphi$, cf. \cite[Remark 2.1]{MPPW19}. Conant, Pillay and Terry have provided alternative candidates for the $\varphi$-stabilizers in \cite[Theorem 2.3]{CPT20}.

% \begin{fact}\textup{(}\cite[Proof of Proposition
% 3.6]{MPPW19}\textup{)}\label{F:Stab_wide}
% Let $G$ be a definable group and $\II$ an $\emptyset$-invariant indiscernibly
% prime ideal on the boolean algebra of all definable subsets
% contained in $(X\cdot X\inv)^2$, where $X$ is a definable
% set  contained in the complete wide type $p(x)$ over an elementary
% submodel $M$.  Assume furthermore that $\II$ is
% translation-invariant under left
% multiplication (whenever the set and its left translate are both
% subsets of $(X\cdot
% X\inv)^2$).	For a stable formula $\varphi(x,y)$, we have that the
% $\varphi$-stabilizer
% $\stab_\varphi(p \restr \varphi)$ is wide.
%\end{fact}

\section{Equations and product-free sets}\label{S:Eq}

 Srour \cite{gS88} proposed a local version of noetherianity in terms of
 \emph{equations}. Recall that a relation $R(x;y)$ is an
\emph{equation} (for a given partition of the free variables into $x$
and $y$) if there is no sequence $(a_i,b_i)_{i\in\N}$ in $\UU$ such that\[
R(a_i,b_j)  \text { for all } i<j, \text { but  } \neg R(a_i,b_i)  \text { for all } i.\]

Equationality is a strengthening of stability, though not every stable relation
need be an equation, since stability is preserved under boolean combinations,
yet equationality is not. If the relation $R$ is $\emptyset$-invariant, we may
assume that the above sequence is indiscernible with respect to any possible
order type. In particular, equationality is symmetric in $x$ and $y$ for an $\emptyset$-invariant relation. Furthermore, it is easy to verify that an $\emptyset$-invariant relation $R$ is equational
if and only if, whenever the indiscernible sequence $(a_i)_{i\in\N}$ satisfies
that $R(a_i, b)$ holds for all $i>0$ with respect to some tuple $b$, then
$R(a_0, b)$ holds as well. Note that this last condition is a priori not symmetric in $x$ and $y$.

\begin{remark}\label{R:eq_nf}(\cite[Remark 2.4]{eH12} \& \cite[Remark
  3.6]{MPZ19}) Consider an $\emptyset$-invariant equational relation
  $R(x,y)$ such that $R(a,b)$ holds for some $a$ and $b$ such that
  $\tp(a/M,b)$ (or $\tp(b/M,a)$) does not divide over the elementary
  submodel $M$. For any $a'\equiv_M a$ and $b'\equiv_M b$, we have
  that $R(a',b')$.
\end{remark}

\begin{proof}
By symmetry of equationality, it suffices to consider the case that $R(a,b)$
holds for some $a$ and $b$ such that $\tp(a/M,b)$ does not divide over $M$. By
invariance, the above holds for all such $a_1$ and $b_1$ with
$(a_1,b_1)\equiv_M (a,b)$. Hence, we may assume that $b'=b$.

Let $a_0$ be $a'$ and choose for $i\geq 1$ a realization $a_{i}$ of
$\tp(a'/M)$ such that the type $\tp(a_{i}/M, b, a_0, \ldots, a_{i-1}) $ is a
coheir extension of its restriction to $M$. In particular, the type
$\tp(a_i/M,b)$ does not divide over $M$, for $i\ge 1$, so $R(a_i,b)$ holds for
$i\ge 1$, since the relation $R$ is stable \cite[Lemma 2.3]{eH12} (\cf
\cite[Lemma 3.3]{KP97}).

By construction, the sequence $(a_i)_{i\in\N}$ is $M$-indiscernible.
The equivalent characterization of equationality yields that $R(a_0, b)$, as
desired.
\end{proof}

Fix for the rest of this section an $\emptyset$-invariant
indiscernibly
prime ideal $\II$ on the boolean ring of
definable subsets contained in $(X\inv\cdot X)^2$, where $X$ is a
wide subset of a group $G$ definable over an elementary submodel $M$. In particular, the set-theoretic complements are taken within the ambient set $(X\inv\cdot X)^2$. Moreover, we assume that the ideal $\II$ is 
%\begin{itemize}
 %   \item stable under the action of taking inverses: if the definable subset $Z$ belongs to $\II$, so does $Z\inv$; and 
    %\item 
    translation-invariant under left multiplication by elements of the subgroup generated by $X$ (in the proofs of Corollary \ref{C:eq_st} and Lemma \ref{L:prod_free}, we will only need to consider products of at most four elements in $X\cup X\inv$): if $Z$ is a definable subset of $(X\inv\cdot X)^2$ and $g$ is an element of the subgroup generated by $X$ such that $g\cdot Z$ lies again in $(X\inv\cdot X)^2$, then $Z$ belongs to $\II$ if and only if $g\cdot Z$ does. 
%\end{itemize}

\begin{notation}
Given two types $r$ and $s$ over $M$ containing the
formula $G(x)$, we consider
	the
	set
	\[ r\ast s= \{ b \cdot c \in G \,|\, b \models r, \ c\models s
	\text{
		and } \tp(c/M,b) \text{ does not divide over } M \}.
	\]
	In particular, we write $r  \ast r\inv$ for $r\ast \tp(a\inv\/M)$,
	where  $a\models r$.
\end{notation}

\begin{fact}\cite[Lemma 2.10]{eH12}\label{F:Hr_eq}
Whenever the $\emptyset$-invariant ideal $\II$ is indiscernibly
prime, for any two partial types $\Phi(x,y)$ and $\Psi(x,z)$
containing $X(x)$,
the relation \[ R_{\Phi,\Psi}(a,b) \ \Leftrightarrow \ \Phi(x,a) \cup
\Psi(x,b) \text{ is not wide}\]
is equational.

In particular, the relation $x\cdot y\in
\stt(p)$ is an equation, hence stable, for any type $p$ containing $X$. Thus, if $a\cdot b$
belongs  to $\stt(p)$ for some $a\cdot b$ in $r\ast s$, then
$r\ast s\subseteq \stt(p)$.
\end{fact}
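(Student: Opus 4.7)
The proof proceeds by contradiction using the $S1$-property of $\II$. Suppose $R_{\Phi,\Psi}$ is not equational: by the definition, there is a sequence $(a_i, b_i)_{i\in \N}$ such that $R(a_i, b_j)$ holds whenever $i<j$ but $\neg R(a_i, b_i)$ for every $i$, and a standard Ramsey-compactness argument lets us assume this sequence is $\emptyset$-indiscernible. Note that $\neg R(a_i,b_i)$ says the partial type $\Phi(x,a_i)\cup \Psi(x,b_i)$ is wide, so no finite conjunction of its formulas lies in $\II$.

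Pick witnesses for $R(a_0,b_1)$: formulas $\phi^\Phi(x,y)\in \Phi$ and $\phi^\Psi(x,z)\in \Psi$ with $\phi^\Phi(x,a_0)\wedge \phi^\Psi(x,b_1) \in \II$. Since $(a_0,b_1)\equiv_\emptyset (a_i,b_j)$ for all $i<j$ by indiscernibility and $\II$ is $\emptyset$-invariant, the same two formulas yield $\phi^\Phi(x,a_i)\wedge \phi^\Psi(x,b_j)\in \II$ for every $i<j$. Viewing $((a_i,b_i))_{i\in \N}$ as an indiscernible sequence of tuples $d_i = (a_i, b_i)$ and setting $\varphi(x,(y,z)):=\phi^\Phi(x,y)\wedge \phi^\Psi(x,z)$, we compute
\[
\varphi(x,d_1)\wedge \varphi(x,d_2)\ \subseteq\ \phi^\Phi(x,a_1)\wedge \phi^\Psi(x,b_2)\ \in\ \II.
\]
The $S1$-property applied to $(d_i)$ and $\varphi$ forces $\varphi(x,d_1)=\phi^\Phi(x,a_1)\wedge \phi^\Psi(x,b_1)\in \II$, contradicting the wideness of $\Phi(x,a_1)\cup \Psi(x,b_1)$.

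For the in-particular clause, the condition $a\cdot b\in \stt(p)$ translates, after a left translation legitimate by the left-translation invariance of $\II$, into the wideness of a partial type of the form $\Phi'(u,a)\cup \Psi'(u,b)$ built from $p$, so that $x\cdot y\in \stt(p)$ falls under the preceding claim and is therefore an equation; stability is then automatic. For the last assertion, given $a\cdot b\in (r\ast s)\cap \stt(p)$, the witnesses $a\models r$, $b\models s$ satisfy that $\tp(b/Ma)$ does not divide over $M$, and Remark \ref{R:eq_nf} applied to the equation $x\cdot y\in \stt(p)$ then transfers this to every product $a'\cdot b'$ with $a'\models r$ and $b'\models s$, yielding $r\ast s\subseteq \stt(p)$. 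The only delicate point in the whole argument is the reduction to fixed witness formulas, which hinges on $\emptyset$-invariance of $\II$ together with indiscernibility; once this is in place, the $S1$-application is essentially formal.
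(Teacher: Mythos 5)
Since the paper only cites \cite[Lemma 2.10]{eH12} for this Fact and gives no proof of its own, I am assessing your argument on its own terms.

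Your proof that $R_{\Phi,\Psi}$ is equational is correct and is the natural argument: extract an $\emptyset$-indiscernible sequence from a witnessing bad array, pick fixed witness formulas $\phi^\Phi$, $\phi^\Psi$ from one positive instance $R(a_0,b_1)$, propagate them by $\emptyset$-invariance of $\II$ to all $i<j$, and then apply the $S1$-property to the indiscernible sequence of pairs $d_i=(a_i,b_i)$ with $\varphi(x,d)=\phi^\Phi(x,a)\wedge\phi^\Psi(x,b)$ to force $\varphi(x,d_1)\in\II$, contradicting wideness of the diagonal instance. This is clean.

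The gap is in the two closing sentences, and it is a polarity error. By your own reduction, $a\cdot b\in\stt(p)$ says that the translated partial type $\Phi'(u,a)\cup\Psi'(u,b)$ \emph{is wide}, i.e.\ it is $\neg R_{\Phi',\Psi'}(a,b)$. What you just proved to be an equation is the non-wideness relation $R_{\Phi',\Psi'}$; equations are not closed under negation (e.g.\ $x=y$ versus $x\neq y$), so one cannot conclude from there that $x\cdot y\in\stt(p)$ ``falls under the preceding claim and is therefore an equation.'' This does not hurt the stability assertion, since stability is negation-closed, but it does break your last step: you invoke Remark \ref{R:eq_nf} ``applied to the equation $x\cdot y\in\stt(p)$,'' whereas that remark needs a \emph{positive} instance of the actual equational relation. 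The correct move is the contrapositive: suppose some $a'\cdot b'\in r\ast s$ with $a'\cdot b'\notin\stt(p)$, i.e.\ $R_{\Phi',\Psi'}(a',b')$; since $\tp(b'/Ma')$ does not divide over $M$, Remark \ref{R:eq_nf} applied to the genuinely equational $R_{\Phi',\Psi'}$ yields $R_{\Phi',\Psi'}(a'',b'')$ for all $a''\equiv_M a'$ and $b''\equiv_M b'$, in particular for the given witnesses $a,b$ of $a\cdot b\in\stt(p)\cap(r\ast s)$, a contradiction. (The Fact's own wording ``the relation $x\cdot y\in\stt(p)$ is an equation'' has the same imprecision and should be read as ``$x\cdot y\notin\stt(p)$ is an equation, hence $x\cdot y\in\stt(p)$ is stable''; following it literally is what led you astray.) A further small point you gloss over, which the paper also leaves implicit, is that $R_{\Phi',\Psi'}$ is only $M$-invariant (since $p$ is a type over $M$), so one must either work over $M$ as the base or name $M$ in the language before invoking Remark \ref{R:eq_nf}.
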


The above fact  yields the following immediate consequence for
the equational relation $R_{\Phi,\Phi}$, which already appeared in \cite[Lemma 2.26(1)]{eH13}:

\begin{cor}\label{C:eq_wide}
If a partial  type $\Phi(x,a)$ is wide, so is
$\Phi(x,b)\cup\Phi(x,c)$,  whenever
$b$ and $c$ are realizations of $\tp(a/M)$ such that $\tp(c/M,b)$
does not divide over
$M$.  \qed
\end{cor}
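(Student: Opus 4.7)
The plan is to derive this from the equational relation furnished by Fact \ref{F:Hr_eq} together with the transport principle in Remark \ref{R:eq_nf}. The argument is by contradiction: assume $\Phi(x,b)\cup\Phi(x,c)$ is not wide, and push this failure back to a failure of wideness of $\Phi(x,a)$ itself.

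Specializing Fact \ref{F:Hr_eq} to $\Psi=\Phi$, the binary relation on parameters
\[
R(y,z)\;\Longleftrightarrow\;\Phi(x,y)\cup\Phi(x,z)\text{ is not wide}
\]
is $\emptyset$-invariant (as $\II$ is) and equational in both arguments (equationality being symmetric for $\emptyset$-invariant relations, as noted just before Remark \ref{R:eq_nf}). Under the assumption for contradiction, $R(b,c)$ holds, and by hypothesis $\tp(c/Mb)$ does not divide over $M$; thus Remark \ref{R:eq_nf} delivers $R(b',c')$ for every $b'\equiv_M b$ and $c'\equiv_M c$. Taking $b'=c'=a$, which is legitimate since both $b$ and $c$ realize $\tp(a/M)$, yields $R(a,a)$, i.e., $\Phi(x,a)$ is not wide — contradicting the hypothesis.

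I do not anticipate any genuine obstacle: as its placement suggests, the corollary is essentially a direct application of the two preceding results. The only things worth double-checking are that Fact \ref{F:Hr_eq} really applies with $\Psi=\Phi$, and that $R(a,a)$ truly unwinds to non-wideness of $\Phi(x,a)$ (since $\Phi(x,a)\cup\Phi(x,a)=\Phi(x,a)$); both are immediate from unwinding definitions.
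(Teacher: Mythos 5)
Your proof is correct, and it is precisely the "immediate consequence" the paper has in mind: apply Fact \ref{F:Hr_eq} with $\Psi=\Phi$ to get the equational relation $R_{\Phi,\Phi}$, then invoke Remark \ref{R:eq_nf} in contrapositive form, specializing $b'=c'=a$ to contradict the wideness of $\Phi(x,a)$. The paper omits the argument entirely, so nothing to compare beyond noting your filling-in is the intended one.
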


\begin{cor}\label{C:eq_st}
Let $q$ be a wide type over $M$ containing the definable set $X$. Given any
(possibly non-wide) type $p$ over $M$ containing the definable
set $X$, the set $p \ast p\inv$ is contained in $ \stt(q)$. In
particular, if
$p$ is wide over $M$, then $p\ast p\inv \subseteq \stt(p)$, so $b
\cdot c\inv$ (and thus $c\cdot b\inv$) belongs to $\stt(q)$, whenever $b$ and $c$ realize $p$ and $\tp(c/M,b)$  does not
divide over
$M$.
\end{cor}

\begin{proof}
Since $q$ is wide, so is the type $\Phi(x,a)=a\inv\cdot q(x)$ for any $a$ in $X$. Fact
\ref{F:Hr_eq} and Corollary \ref{C:eq_wide} yield that 
$b\cdot c\inv$
belongs to $\stt(q)$ for any $b$ and $c$ realizing $p$ with
$\tp(c/M,b)$ non-dividing over $M$. Since $\stt(q)$ is closed under inverses, we deduce that  the product $c\cdot b\inv$ also lies in $\stt(q)$. We conclude that the set (of realizations of) $p \ast p\inv
\subseteq \stt(q)$, as desired. 
\end{proof}

Motivated by the second author's model-theoretic approach  in
\cite{dP19} to Gowers's solution \cite{wG08} to the asymptotics of
product-free sets \cite{BS85} in finite quasi-random groups, we say
that a partial type $\Sigma$ is \emph{product-free} if $\Sigma\cup (\Sigma\inv\cdot \Sigma)$
is inconsistent. Note that $\Sigma$ is not product-free if and only if
some realization $a$ of $\Sigma$ is of the
form $a=b\cdot c$, where $b$ and $c$ both realize $\Sigma$. If the complete type $p$ is not product-free, then every realization $a$ of $p$ is a product of two realizations of $p$.  A partial type is
not product-free if and only if it is the intersection of definable
sets which are not product-free. Therefore, if there exists a
product-free (partial) type over an elementary submodel, there is a
product-free (partial) type over every elementary submodel.

The next result can be easily proved along the lines of
Fact \ref{F:Hr}, as
shown in \cite[Lemma 3.3]{dP19}. For the sake of the
presentation, we will provide a direct
proof, which essentially follows the outline of  the proof
of \cite[Theorem 2.12]{MOS18}.

\begin{lemma}\label{L:prod_free}
  Given a wide type $p$ containing the definable set $X$ over $M$, the
  following conditions are equivalent:
\begin{enumerate}
	\item\label{E:stt} The type $p$ is contained in $\stt(p)$.
	\item\label{E:ast} The type $p$ is contained  in $(p\inv\ast p)$.
	\item\label{E:pf} The type $p$ is not product-free.
\end{enumerate}
\end{lemma}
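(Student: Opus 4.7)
The equivalences are established via the cycle $(2)\Rightarrow(1)\Rightarrow(3)$ together with $(3)\Rightarrow(1)\Rightarrow(2)$. The implications $(1)\Rightarrow(3)$ and $(2)\Rightarrow(1)$ are quick unpackings, while $(1)\Rightarrow(2)$ and $(3)\Rightarrow(1)$ carry the technical weight.

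$(2)\Rightarrow(1)$ is immediate from Corollary~\ref{C:eq_st}, which gives $p\inv\ast p\subseteq\stt(p)$. For $(1)\Rightarrow(3)$, take $a\models p$; wideness (hence consistency) of $p\cup a\cdot p$ yields some $c\models p$ with $a\inv c\models p$, so $b:=a\inv c$ lies in $p\cap(p\inv\cdot p)$ and witnesses that $p$ is not product-free.

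For $(1)\Rightarrow(2)$, fix $u\models p$. Since $u\in\stt(p)$, the partial type $p\cup u\cdot p$ is wide over $Mu$; extend it to a complete wide type over $Mu$ and realize it by some $v\models p$ with $u\inv v\models p$ and $\tp(v/Mu)$ wide. A standard Ramsey-style iteration using the indiscernibly prime property of $\II$ (in the spirit of Remark~\ref{R:wide_infinite}) shows that wide formulas do not divide over~$M$, so $\tp(v/Mu)$ is non-dividing over~$M$. Hence $a:=u\inv v\models p$ lies in $p\inv\ast p$, and $\mathrm{Aut}_M$-invariance of both $p$ and $p\inv\ast p$ promotes this to $p\subseteq p\inv\ast p$.

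The substantive step is $(3)\Rightarrow(1)$, to be carried out along the outline of~\cite[Theorem~2.12]{MOS18}. The idea is that non-product-freeness is equivalent to $p\cap(p\cdot p\inv)\neq\emptyset$, so $p$ meets $\stab(p)=(p\cdot p\inv)^2$; since $p$ is a wide complete type over~$M$, the stabilizer-theoretic fact that wide types in $\stab(p)$ belong to $\stt(p)$ (cf.\ Fact~\ref{F:Hr}) gives $p\subseteq\stt(p)$. I expect this to be the main obstacle: reproducing the relevant piece of the stabilizer theorem \emph{in situ}, by combining the equationality of ``$x\cdot y\in\stt(p)$'' supplied by Fact~\ref{F:Hr_eq} with iterated applications of Corollary~\ref{C:eq_wide} to upgrade a single witness to the requisite containment, rather than invoking Fact~\ref{F:Hr} as a black box.
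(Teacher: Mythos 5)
Your proofs of $(2)\Rightarrow(1)$, $(1)\Rightarrow(3)$, and $(1)\Rightarrow(2)$ are correct and essentially match the paper (the paper routes the cycle as $(1)\Rightarrow(2)\Rightarrow(3)\Rightarrow(1)$, but your $(1)\Rightarrow(2)$ and $(1)\Rightarrow(3)$ are, up to renaming variables, the same arguments, with the non-dividing claim justified exactly as the paper justifies it, via Remark~\ref{R:wide_infinite}).

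The gap is in $(3)\Rightarrow(1)$, which you correctly flag as the substantive step but leave as a sketch. Your sketch does not survive closer inspection: you want to argue that $p$ meets $p\cdot p\inv\subseteq\stab(p)$, hence $p\subseteq\stab(p)$, and then cite the part of Fact~\ref{F:Hr} that says wide types over $M$ in $\stab(p)$ lie in $\stt(p)$. But Fact~\ref{F:Hr} assumes the ideal is defined on \emph{all} definable subsets of $G$ and is left-translation-invariant under all of $G$, whereas the standing hypotheses of this section only give an ideal on subsets of $(X\cdot X\inv)^2$, translation-invariant only under left multiplication by elements of $X$. So $\stab(p)$ need not even be a sensible object here, and the containment $p\subseteq\stab(p)$ is not available. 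You acknowledge this (``rather than invoking Fact~\ref{F:Hr} as a black box''), but the replacement argument you describe --- ``combining the equationality of $x\cdot y\in\stt(p)$ with iterated applications of Corollary~\ref{C:eq_wide}'' --- is too vague, and in fact does not quite match what is needed. The paper's actual argument introduces an auxiliary wide type $q=\tp(b\inv\cdot c/M)$ with $b,c\models p$ and $\tp(c/Mb)$ wide, proves $p\subseteq\stt(q)\cdot\stt(q)$ (using that $p$ is not product-free together with Corollary~\ref{C:eq_st}), and then establishes the key Claim that $\big(\stt(q)\cdot\stt(q)\big)\ast q\subseteq\stt(p)$ --- this step is where the stability of the relation $x\cdot y\in\stt(p)$ from Fact~\ref{F:Hr_eq} and Corollary~\ref{C:eq_st} (not \ref{C:eq_wide}) are actually used, by arguing for a single ``suitable'' product and upgrading via stability. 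Finally, applying the Claim to $b\in\stt(q)^2$ and $\eta=b\inv c$ gives $c\in\stt(p)$, whence $p\subseteq\stt(p)$. Without carrying out some version of this (or another argument that stays within the local ideal), the $(3)\Rightarrow(1)$ implication is not proved.
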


\begin{proof}
  For $(\ref{E:stt})\Rightarrow (\ref{E:ast})$, suppose that a
  realization $b$ of $p$ is contained in $\stt(p)$, so $b\inv$ belongs
  to $\stt(p)$, and thus $p\cup b\inv\cdot p$ is wide with respect to
  $\II$.  In particular, there is a realization $a$ of
  $p\cup b\inv\cdot p$ which is wide over $M\cup\{b\}$. That is, the
  element $c=b \cdot a$ realizes $p$ and its type $\tp(c/M,b)$ does not divide over
  $M$, by Remark \ref{R:wide_infinite}. Hence
  $a=b\inv\cdot c$ belongs to $p\inv \ast p$, so every realization of
  $p$ does, as desired.

The implication $ (\ref{E:ast}) \Rightarrow (\ref{E:pf})$ is clear, for
$(p\inv\ast p)\subseteq (p\inv\cdot p)$.

For $(\ref{E:pf})\Rightarrow(\ref{E:stt})$, observe that $p\inv$ is not product-free whenever $p$ is not product-free. In particular, the complete type $p\inv$ is contained in $\stt(q)\cdot \stt(q)$: indeed, choose realizations $a$, $b$ and
$c$ of $p$ such that $a\inv=b\cdot c\inv$. Take now $d$
realizing $p$ wide over $M, a, b, c$ and notice that
\[
a\inv= b\cdot c\inv= (b\cdot d\inv) \cdot (d\cdot c\inv)
\stackrel{\ref{C:eq_st}}{\in} \stt(q)\cdot
\stt(q),
\]
by Corollary \ref{C:eq_st}. In particular, the type $p$ is contained in $\stt(q)\cdot \stt(q)$, for $\stt(q)$ is closed under inverses.

Choose now $b$ and $c$ realizing $p$ such that $\tp(c/M,b)$ is wide. Set $\eta=b\cdot c\inv$. Notice that $q=\tp(\eta/M)$ is wide, for it is the restriction of the wide type $\tp(\eta/M,b)$. Moreover, the type $q$ is contained in $\stt(p)$, by Corollary \ref{C:eq_st}.

\begin{claim}
	The set $\big(\stt(q)\cdot \stt(q)\big) \ast q$ is contained in
	$\stt(p)$, that is, given $\xi$ in $\stt(q)\cdot \stt(q)$ and
	$\eta_1$ realizing $q$ 	such that $\tp(\eta_1/M,\xi)$ does
	not divide over $M$, then $\xi \cdot \eta_1$ belongs to
	$\stt(p)$.
\end{claim}
\begin{claimproof}
	We first show that $\stt(q)\ast q$ is contained in $\stt(p)$. Given $\xi_1$ in
	$\stt(q)$ arbitrary, the type $q\cup\xi_1\cdot
	q$ is wide, so choose some realization $\eta_1$ of $q$ such
	that $\xi_1 \cdot \eta_1$
	realizes $q$ and $\tp(\eta_1/M,\xi_1)$ is wide, hence
	non-dividing over $M$ by Remark \ref{R:wide_infinite}. As remarked above, the type $q$ is
	contained in
	$\stt(p)$, so the element $\xi_1\cdot \eta_1$ of $\stt(q)\ast
	q$ belongs to $\stt(p)$. The relation $x\cdot y \in\stt(p)$ is stable, by Fact \ref{F:Hr_eq}, so we deduce that every element of $\stt(q)\ast q$ belongs to $\stt(p)$.   
 
	Now we prove that $\big(\stt(q)\cdot \stt(q)\big) \ast q\subseteq \stt(p)$.  Since the relation $x\cdot y \in\stt(p)$ is stable (see Fact \ref{F:Hr_eq}), we need only show that some product $\xi\cdot \eta_2$ belongs to $\stt(p)$ with $\tp(\eta_2/M, \xi)$ not dividing over $M$. Given a realization $\xi=\xi_1\cdot\xi_2$ with both
	$\xi_1$ and $\xi_2$ in $\stt(q)$, find as above some
	realization  $\eta_2$ of $q$
	such that the type $\tp(\eta_2/M,\xi_1,\xi_2)$ is wide and
	$\xi_2\cdot \eta_2$ realizes $q$, since $\xi_2$ belongs to
	$\stt(q)$.  In
	particular, the realization $\xi_2\cdot \eta_2$ of $q$ is wide over $M, \xi_1$, and hence $\tp(\xi_2\cdot \eta_2/M,\xi_1)$ does not divide over $M$, so the element $\xi_1\cdot
	(\xi_2\cdot
	\eta_2)=\xi\cdot\eta_2$ of $\stt(q)\ast
	q$ belongs to $\stt(p)$, by the first part of the proof of this claim. This yields the desired result. 
\end{claimproof}

To conclude the proof, recall that $p\inv$ is contained in $\stt(q)\cdot
\stt(q)$. Hence, the above realization $b\inv$ of $p\inv$
lies in
$\stt(q)\cdot
\stt(q)$. As the realization $\eta=b\cdot c\inv$ of $q$ is wide over
$M\cup\{b\}$,
we
deduce from the previous Claim that the element $b\inv\cdot\eta=c$ belongs
to $\stt(p)$. Since $c$ realizes $p$, this finishes the proof.
\end{proof}

To conclude this section, assume now that the group $G$ equals the
definable set $X$, and that both $G$ as well as the ideal $\II$ are
defined without parameters, thus the ideal $\II$ is indiscernibly
prime on the boolean algebra of all definable subsets of $G$. We
equip the quotient $G/\GO M$ with the logic topology, that is, a subset
in the quotient is closed if and only if its preimage in $G$ is
type-definable over $M$. The natural map from $G(M)$ to the compact
Hausdorff group $G/\GO M$ is the \emph{universal defi\-nable
  compactification} of the group $G(M)$ with respect to the elementary
structure $M$ \cite[Proposition 3.4]{GPP14}. We will establish an
equivalence between the triviality of the universal definable
compactification and the existence of wide product-free sets.

\begin{cor}\label{C:St-prod_free}
The following are equivalent:
\begin{enumerate}
		\item\label{E:sttp} Every wide type $p$ over an
		elementary submodel is contained in
		$\stt(p)$.
		\item\label{E:Prod_free} No wide type $p$ over an
		elementary submodel is product-free.
	\item\label{E:AllCtd} For every elementary submodel
	$N$, we have that $G=\GO N$.
\newcounter{saveenum}
\setcounter{saveenum}{\value{enumi}}
\end{enumerate}
Furthermore, if $\II$ is type-definable over $\emptyset$, then
each  of
the above conditions is equivalent to:
\begin{enumerate}
\setcounter{enumi}{\value{saveenum}}
	\item\label{E:SomeCtd} For some elementary
	submodel $M$, we have that $G=\GO M$.
\end{enumerate}
\end{cor}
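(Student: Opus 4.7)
I would first note that $(\ref{E:sttp}) \Leftrightarrow (\ref{E:Prod_free})$ follows from Lemma \ref{L:prod_free} applied uniformly over every elementary submodel $N$ and every wide type over $N$. For $(\ref{E:AllCtd}) \Rightarrow (\ref{E:sttp})$, given a wide type $p$ over $N$, Fact \ref{F:Hr} gives $\stab(p) = \GO N$, which equals $G$ by hypothesis; the ``moreover'' clause of Fact \ref{F:Hr}, applied to $p$ itself (trivially contained in $G = \stab(p)$), then yields $p \subseteq \stt(p)$.

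For $(\ref{E:sttp}) \Rightarrow (\ref{E:AllCtd})$, I would argue by contradiction. Suppose $G \neq \GO N$ for some $N$ and pick any wide type $p_0$ over $N$; Fact \ref{F:Hr} gives that $\GO N = \stab(p_0)$ is wide and $N$-type-definable. Translation invariance of $\II$ then makes any coset $g_0 \GO N$ with $g_0 \in G \setminus \GO N$ wide, so the partial type ``$x \in g_0\GO N$'' extends to a complete wide type over $N \cup \{g_0\}$, whose restriction $p$ to $N$ is a complete wide type over $N$ with realisations in $g_0\GO N \subseteq G \setminus \GO N$. But $(\ref{E:sttp})$ combined with Fact \ref{F:Hr} would force $p \subseteq \stt(p) \subseteq \stab(p) = \GO N$, a contradiction.

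The implication $(\ref{E:AllCtd}) \Rightarrow (\ref{E:SomeCtd})$ is trivial. For the converse, assuming $\II$ is type-definable over $\emptyset$, I would first remark that the argument for $(\ref{E:AllCtd}) \Rightarrow (\ref{E:sttp})$ uses $\GO N = G$ only at the specific model over which the wide type is taken, so specialising to the $M$ of $(\ref{E:SomeCtd})$ and invoking Lemma \ref{L:prod_free} already yields $(\ref{E:Prod_free})$ for this particular $M$. To propagate this to an arbitrary $N$, I would suppose that some wide type $q$ over $N$ is product-free; then $q$ contains a wide formula $\varphi(x, b)$, with $b \in N$, which is product-free as a set. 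Here ``product-free'' is captured by a single $\mathcal L$-formula $\pi_\varphi(y)$, whereas wideness of $\varphi(x, b)$ means that some $\sigma \in \Sigma_\varphi(y)$ from the $\emptyset$-type-definition of $\II$ fails at $b$; hence the $\mathcal L$-sentence $\exists y \bigl(\pi_\varphi(y) \wedge \neg \sigma(y)\bigr)$ holds in $\UU$ and, by elementarity, in $M$. Any such witness $b' \in M$ yields a wide, product-free formula $\varphi(x, b')$ over $M$, and any wide completion of it over $M$ is a wide product-free type, contradicting $(\ref{E:Prod_free})$ for $M$. The main obstacle lies in this last transfer: one must combine the merely type-definable nature of wideness with the first-order character of ``product-free as a set'' to make bare elementarity available, which is exactly where the extra hypothesis on $\II$ is essential.
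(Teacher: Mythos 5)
Your proof is correct and follows the same overall strategy as the paper (cycle through the three conditions using Lemma \ref{L:prod_free} and Fact \ref{F:Hr}, then handle $(\ref{E:SomeCtd})$ via the type-definability of $\II$), but it differs in two places. To close the cycle, the paper proves $(\ref{E:Prod_free})\Rightarrow(\ref{E:AllCtd})$ by citing a lemma of Hrushovski to the effect that every left coset of $\GO N$ contains a wide type and then observing that a proper coset is automatically product-free; you instead show $(\ref{E:sttp})\Rightarrow(\ref{E:AllCtd})$ directly, deriving wideness of $\GO N$ from Fact \ref{F:Hr} (as $\stab(p_0)$ for some wide $p_0$), translating to get a wide proper coset, completing to a wide type $p$ over $N$ and deducing the contradiction $p\subseteq\stt(p)\subseteq\stab(p)=\GO N$. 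Both work; yours stays closer to Fact \ref{F:Hr} and avoids an extra citation, while the paper's coset argument is slightly shorter. Your only loose phrasing here is ``with realisations in $g_0\GO N$'' --- not every realisation of the restricted type over $N$ lies in that specific coset, but since $\GO N$ is $N$-type-definable, $p$ must be entirely disjoint from $\GO N$, which is what the contradiction actually needs. For $(\ref{E:SomeCtd})\Rightarrow(\ref{E:Prod_free})$, your unpacking is, if anything, more careful than the paper's: Remark \ref{R:wide_down_up} alone gives wideness at some $m\in M$ but not product-freeness, and you correctly point out that one must conjoin the first-order formula $\pi_\varphi(y)$ expressing product-freeness with the negation $\neg\sigma(y)$ of some formula from the $\emptyset$-type-definition $\Sigma_\varphi$ witnessed by $b$, then transfer the resulting $\mathcal L$-sentence to $M$ by elementarity. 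This makes explicit exactly where the hypothesis that $\II$ is type-definable over $\emptyset$ enters, which the paper treats somewhat tersely.
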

\begin{proof}
	Lemma \ref{L:prod_free} yields that $(\ref{E:sttp})
	\Rightarrow(\ref{E:Prod_free}) $. For the implication
	$(\ref{E:Prod_free})\Rightarrow  (\ref{E:AllCtd})$, observe that any left
	coset
	of $\GO N$ contains a wide type, by \cite[Lemma 3.3]{eH12}. Therefore,
	there are no proper cosets of $\GO  N$, since a proper coset clearly defines a 
	product-free partial type. 	Hence,  all is left to show is the implication
	$(\ref{E:AllCtd})\Rightarrow
	(\ref{E:sttp})$. To see this, given a wide complete type
	$p$ over an elementary submodel $N$,  by the last remark in Fact \ref{F:Hr},
	the group $\stab(p)$ equals $\GO N$, which is $G$. Hence, the
	wide type $p$ is contained in $G=\stab(p)$,  so it must belong
	to $\stt(p)$, again by Fact \ref{F:Hr}, which yields the desired conclusion.

	Suppose now that $\II$ is type-definable over $\emptyset$.
        The implication
        $(\ref{E:AllCtd})\Rightarrow (\ref{E:SomeCtd})$ is immediate,
        so we need only show that
        $(\ref{E:SomeCtd})\Rightarrow (\ref{E:Prod_free})$. Suppose
        that $G=\GO M$ for some fixed elementary submodel $M$ and
        choose now a wide product-free type $p$ over an elementary
        submodel $N$. By compactness, there is a wide product-free
        $N$-definable set $A$ in $p$. Write $A=\varphi(x,n)$ for some
        tuple of parameters $n$ in $N$. Since $\II$ is type-definable,
        the Remark \ref{R:wide_down_up} yields that
        there is some $m$ in $M$ such that $\varphi(x,m)$ is wide and
        product-free. Any wide
        type $q$ over $M$ completing $\varphi(x,m)$ is
        product-free. The wide type $q$ lies in $G=\GO M$, so it must
        lie in $\stt(q)$, by Fact \ref{F:Hr}, contradicting Lemma
        \ref{L:prod_free}.
\end{proof}

A particular example of groups with trivial universal definable
compactifications are non-principal ultraproducts of the finite simple
groups $\mathrm{PSL}_2(\mathbb{F}_q)$, since such ultraproducts are
again simple groups. More generally, ultra quasirandom groups, studied
by Bergelson and Tao \cite[Definition 31]{BT14}, satisfy all of the
above conditions, as shown in \cite[Theorem 4.8]{dP19}, for ultra
quasirandom groups are particular ultraproducts of finite groups, and
hence can be seen as non-standard finite groups (see \cite[Definition
2.1]{aP17} \& \cite[Definition 2.4]{dP19}). In the following section,
we will explore the connection between product-free types and IP
sets. This connection was thoroughly studied in the work of Bergelson
and Tao \cite{BT14} for ultra quasirandom groups, motivated by a
quantitative version in the finite setting first exhibited for
quasirandom groups by Gowers \cite{wG08}.

\section{IP Sets and Measures}\label{S:IP}

The notions of lower, upper and Banach density (as stated in the
introduction) can be generalized to countable groups using F\o
lner sequences. A sequence
$(F_n)_{n\in\mathbb N}$ of finite sets of a countable group $G$ is
a (left) F\o lner sequence
 if $\lim\limits_{n\to \infty} |F_n| = 	\infty$ and such that
\[
\lim\limits_{n\to\infty} \frac{|F_n \cap g \cdot F_n|}{|F_n|} = 1,
\]
\noindent for all $g$ in $G$. Notice that a subsequence of a F\o
lner sequence is again F\o lner.

A  countable group is  \emph{amenable} if it can be
equipped with a F\o lner
sequence.  The
archetypal example of an amenable group  is $\mathbb Z$ with
the F\o lner sequence consisting of the bounded intervals
$F_n=[-n,n]$.
Every countable solvable (in particular, every abelian countable)
group is amenable.

 Given an amenable group $G$ with a distinguished F\o lner
 sequence $\mathcal  F=(F_n)_{n\in\mathbb N}$, we define the
 \emph{upper density} (with respect to $\mathcal F$) as
\[
\band F(A)= \limsup\limits_{n\to\infty}
\frac{|A\cap F_n|}{|F_n|} \text { for all } A\subseteq G.
\]
\noindent Note that $\band F(A)=\band F(g\cdot A)$ for all $g$
in $G$, since the sequence $\mathcal  F$ is F\o lner.  The
\emph{upper (left) Banach density} is then defined as
\[
d^*(A) = \sup \left\{ \band F(A) \ | \ \mathcal F \text{ is a (left)
F\o lner sequence in } G \right\}.
\]

\begin{remark}\label{R:BD_same}
  For $G=\mathbb Z$, this notion of upper Banach density and the one
  of the introduction are equivalent (see \cite[Remark 1.1 and Lemma 3.3]{BBF10}).
  %:
 % Indeed, given a set $A$ and a rational number $q<d^*(A)$, it
 % suffices to show that for every $n$ in $\mathbb N$, there is some
 % interval $[m+1,m+n]$ such that
 % \[ q<\frac{\big|A\cap [m+1,m+n]\big|}{n}.\] Since the value
 % $d^*(A)$ equals $d^*(-i+A)$ for all $1\leq i\leq n$, choose a set $F$ in some
 % F\o lner sequence of $\mathbb Z$ with
 % \[ q<\frac{|A\cap (i+F)|}{|F|} \text{ for all } 1\leq i\leq n,\] and
 % set $Y$ the collection of pairs $(i,m)$ in $[1,n]\times F$ such that
 % $i+m$ in $A$.  Observe that
 % \[ \frac{1}{n}\sum\limits_{m\in F} \big|A\cap [m+1,
 %   m+n]\big|=\frac{1}{n} |Y|=\frac{1}{n} \sum\limits_{i=1}^{n} |A\cap
 %   (i+F)|> |F|\cdot q ,\] so there must be some $m$ (in $F$) with
 % $\frac{\big|A\cap [m+1,m+n]\big|}{n}>q$, as desired.
\end{remark}

A version of Furstenberg's correspondence principle for amenable
groups \cite[Theorem 5.8]{vB06}, or alternatively a clever use of
Hahn-Banach \cite[Proposition 4.19]{vB00}, produces from a set $A$ of
positive upper (Banach) density on an amenable group a left
translation invariant measure, which bounds from below the density of
any intersection of finitely many translates of $A$. For the sake of
the presentation, we will include a proof with no additional
input from  our  side.

\begin{fact}\label{F:Berg}
Let $\mathcal F=(F_n)_{n\in\mathbb N}$ be F\o lner sequence  in
a countable amenable group $G$ such that $\band F(A)>0$ for a
fixed subset $A$ of $G$. There exists a left translation invariant finitely
additive probability measure $\mu$ on $G$ with $\mu(A)=\bar
d_{\mathcal F}(A)$ such that for all $g_1,\ldots,g_r$  in $G$ we
have that
\[
\band F\left(g_1\cdot A \cap \ldots \cap g_r\cdot A \right) \geq
\mu\left(g_1\cdot A \cap \ldots \cap g_r\cdot A \right) .\]
\end{fact}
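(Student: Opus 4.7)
The plan is to construct $\mu$ as an ultrafilter limit of the normalised counting measures along a subsequence of $\mathcal F$ chosen so that the density of $A$ is actually attained (as a genuine limit, not merely a limsup). Concretely, pick a subsequence $(F_{n_k})_{k\in\N}$ of $\mathcal F$ such that $\frac{|A\cap F_{n_k}|}{|F_{n_k}|} \to \band F(A)$, and note that any subsequence of a F\o lner sequence is again F\o lner. Fix a non-principal ultrafilter $\mathcal U$ on $\N$ and define, for every subset $B$ of $G$,
\[
\mu(B)=\lim_{k\to\mathcal U}\frac{|B\cap F_{n_k}|}{|F_{n_k}|}.
\]
This limit exists because the sequence lies in $[0,1]$, and finite additivity and $\mu(G)=1$ follow immediately from the same properties of each counting ratio together with the fact that ultrafilter limits respect addition.

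Next I would verify the two defining properties. For $\mu(A)=\band F(A)$, note that along the chosen subsequence the ratio converges in the usual sense to $\band F(A)$, and the ultrafilter limit of a convergent sequence equals its ordinary limit. For left translation invariance, given $g$ in $G$ one has $|g\cdot B\cap F_{n_k}|=|B\cap g\inv\cdot F_{n_k}|$, and the F\o lner condition gives $|F_{n_k}\triangle g\inv\cdot F_{n_k}|/|F_{n_k}|\to 0$; hence
\[
\left|\frac{|g\cdot B\cap F_{n_k}|}{|F_{n_k}|}-\frac{|B\cap F_{n_k}|}{|F_{n_k}|}\right|\le\frac{|F_{n_k}\triangle g\inv\cdot F_{n_k}|}{|F_{n_k}|}\longrightarrow 0,
\]
so passing to the $\mathcal U$-limit yields $\mu(g\cdot B)=\mu(B)$.

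Finally, the density bound follows from a general principle: the ultrafilter limit of any subsequence of a bounded real sequence is, in particular, a subsequential limit, and is therefore at most the $\limsup$ of the original sequence. Applied to $B=g_1\cdot A\cap\cdots\cap g_r\cdot A$, this gives
\[
\mu(B)=\lim_{k\to\mathcal U}\frac{|B\cap F_{n_k}|}{|F_{n_k}|}\le\limsup_{n\to\infty}\frac{|B\cap F_n|}{|F_n|}=\band F(B),
\]
as required. The only step that is not routine bookkeeping is the translation-invariance computation, where one must remember to exploit the F\o lner condition on $(F_{n_k})$ (inherited from $\mathcal F$); the decision to pass to a subsequence first, rather than to try to choose a smarter ultrafilter on the original sequence, is what makes the equality $\mu(A)=\band F(A)$ painless.
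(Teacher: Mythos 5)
Your proof is correct and follows the same basic strategy as the paper (an ultrafilter limit of counting ratios along a subsequence on which the density of $A$ is attained as a genuine limit), but you have genuinely streamlined it. The paper, after passing to a first subsequence where the ratio for $A$ converges, additionally enumerates all finite intersections $B$ of translates of $A$ and runs a diagonal argument to extract a further F\o lner subsequence $\mathcal F'$ along which \emph{every} such ratio $|B\cap F'_m|/|F'_m|$ converges; the point is then that the resulting (ordinary) limit is a subsequential limit of the original sequence of ratios and hence at most $\band F(B)$. You observe that this diagonalization is unnecessary: the ultrafilter limit of \emph{any} bounded real sequence with respect to a non-principal ultrafilter is already a cluster point of that sequence, so $\mu(B)=\lim_{k\to\mathcal U}|B\cap F_{n_k}|/|F_{n_k}|$ is automatically a subsequential limit of the original sequence $\big(|B\cap F_n|/|F_n|\big)_n$ and therefore $\le\band F(B)$, with no need to arrange actual convergence along the subsequence. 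This gives a cleaner argument, at the cost of the (mild) general fact about ultrafilter limits and cluster points, which you state and which is indeed standard. The rest of your verification ($\mu(A)=\band F(A)$ via convergence along the chosen subsequence, and translation invariance via the inherited F\o lner property) matches the paper and is carried out correctly.
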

\begin{proof}
By definition of $\limsup$, there is a subsequence $\mathcal
F_0=(F_{n^0_k})_{k\in\N}$ of $\mathcal F$ such that \[\bar d_{\mathcal{F}_0}
(A)
= \band {\mathcal F} (A)=
\lim\limits_{k\to\infty} \frac{|A\cap F_{n^0_k}|}{|F_{n^0_k}|}.
\]
Notice that the sequence $\mathcal
F_0$ is again F\o lner. Since $G$ is countable, choose an enumeration
$(A_m)_{m\in \N}$
of all finite intersections of translates of $A$ by elements of $G$
with $A_0=A$ and find for each subset $A_m$ a F\o lner
subsequence $\mathcal F_{m}=(F_{n^{m}_{k}})_{k\in\mathbb N}$ of
$\mathcal F_{m-1}$ such that the limit
\[
\lim\limits_{k\to\infty} \frac{|A_m\cap F_{n^m_{k}}|}{|F_{n^m_{k}}|}
\]
exists. By a standard diagonal procedure, consider now $\mathcal F'$
with  $F'_m=F_{n^m_m}$. By  construction,  for every
$g_1,\ldots,g_r$ in $G$,  the limit
\[\lim\limits_{m\to\infty} \frac{\big| \bigcap\limits_{i=1}^r
(g_i\cdot A) \cap F'_m\big|}{|F'_m|} \]
exists, and bounds $\band F(g_1\cdot A \cap \ldots \cap g_r\cdot A)$
from below.

Choose now a non-principal ultrafilter $\mathcal U$ on $\mathbb N$ and
define the ultralimit with respect to $\mathcal U$
\[
\mu(B) = \lim\limits_{\mathcal U} \frac{\big| B \cap
	F'_m\big|}{|F'_m|}
    \] for each subset $B$ of $G$. By construction, the function $\mu$
    is a finitely additive probability measure on $G$ satisfying that
    $\mu(A) = \bar d_{\mathcal F}(A)$. Furthermore, for all
    $g_1,\ldots,g_r$ in $G$, we also have that
    $\band F(g_1\cdot A \cap \ldots \cap g_r\cdot A) \ge \mu(g_1\cdot
    A \cap \ldots \cap g_r\cdot A)$.  Finally, since the sequence
    $\mathcal F'$ is F\o lner, the above measure is left translation
    invariant.
\end{proof}
Every measure can be made definable over $\emptyset$ after possibly
expanding the language (see the discussion before the Remark \ref{R:wide_down_up}). Thus, we deduce from Fact \ref{F:Berg} the
following result:
\begin{cor}\label{C:BD_wide}
  For each set of positive upper (Banach) density $A$ of an amenable
  (countable) group $G$, there is an indiscernibly prime left
  translation invariant $\emptyset$-type-definable (hence, invariant)
  ideal $\II$ on the boolean algebra of all definable sub\-sets of $G$
  such that $A$ is wide with respect to $\II$. Furthermore, for any
  $g_1,\ldots,g_r$ in $G$, if the intersection
  $ \bigcap_{i=1}^r (g_i\cdot A) $ is wide, then it has
  positive upper (Banach) density.\qed
\end{cor}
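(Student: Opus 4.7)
The plan is to apply Fact \ref{F:Berg} to produce a left translation invariant finitely additive probability measure $\mu$ on $G$ satisfying $\mu(A)=\band F(A)>0$ together with the lower bound
\[\band F(g_1\cdot A \cap \cdots \cap g_r\cdot A) \geq \mu(g_1\cdot A \cap \cdots \cap g_r\cdot A)\]
for all $g_1,\ldots,g_r$ in $G$, and then convert $\mu$ into a Keisler measure whose null-ideal enjoys all the listed properties.

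Following the procedure sketched in the paragraph preceding Remark \ref{R:wide_down_up}, expand the language $\LL$ of $G$ by adding, for each rational $r\in[0,1]$ and each formula $\varphi(x,y)$, a predicate $Q_{r,\varphi}(y)$ interpreted so that $Q_{r,\varphi}(b)$ holds precisely when $\mu(\varphi(\UU,b))\le r$; iterate countably many times and pass to a saturated elementary extension of the resulting expansion of $G$. In this new ambient model, the Keisler measure induced by $\mu$ on the boolean algebra of all definable subsets of $G$ is definable over $\emptyset$. Let $\II$ denote its null-ideal.

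It remains to verify the properties in the statement. Since the measure is definable over $\emptyset$, the ideal $\II$ is type-definable over $\emptyset$, hence $\emptyset$-invariant; and it is indiscernibly prime as noted in Section \ref{S:S1} (the whole space having measure at most $1$). Left translation invariance of $\II$ on the boolean algebra of definable subsets is inherited from the left translation invariance of $\mu$ on $G$ granted by Fact \ref{F:Berg}, because the expansion procedure simply records values of $\mu$ on definable families, and these values are unaffected by left-multiplying the parameter of the defining formula by an element of $G$. Finally, $A$ is wide with respect to $\II$ since $\mu(A)=\band F(A)>0$.

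For the furthermore, suppose that $\bigcap_{i=1}^r (g_i\cdot A)$ is wide. Then, by definition of the null-ideal, $\mu\bigl(\bigcap_{i=1}^r (g_i\cdot A)\bigr)>0$, and the inequality supplied by Fact \ref{F:Berg} yields $\band F\bigl(\bigcap_{i=1}^r (g_i\cdot A)\bigr)>0$, and in particular positive upper Banach density. The only point requiring some care is checking that the translation invariance of $\mu$ on subsets of $G$ truly passes to invariance of $\II$ on the algebra of all definable subsets produced by the countable iteration of the expansion, but this is ensured because at each stage the newly definable sets are boolean combinations of sets of the form $\varphi(\UU,b)$ whose measure is left-invariant, and this invariance is what the predicates $Q_{r,\varphi}$ record.
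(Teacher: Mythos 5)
Your proposal is correct and takes essentially the same approach the paper intends: apply Fact \ref{F:Berg} to obtain the left-invariant measure $\mu$ with the stated density bound, expand the language as described before Remark \ref{R:wide_down_up} so that $\mu$ becomes a $\emptyset$-definable Keisler measure, and then take $\II$ to be its null-ideal, which is automatically $\emptyset$-type-definable, invariant, indiscernibly prime, and left translation invariant (the latter because the first-order axioms recording translation invariance of $\mu$ on $G$ persist in the saturated extension). The paper treats the corollary as an immediate consequence of Fact \ref{F:Berg} together with the preceding discussion on definability of measures, and your write-up simply fills in the same steps.
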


As first proved in \cite[Theorem 2.6]{ACG19} (see also
\cite[Proposition 5.3]{gC19}), in the presence of
stability, a subset $A$ of positive upper Banach density in an
amenable discrete group has the productset property, that is, there are
infinite sets $B$ and $C$ such that $B\cdot C$ is contained in $A$.
We say that a set $A$ is stable if the equivariant relation
$A(y\cdot x)$ is stable. We would first like to clarify the relation between wide and generic sets under the assumption of stability (We thank the referee for pointing out that our original proof of Proposition \ref{P:BD_stable} could be reformulated in a simpler context).

\begin{remark}\label{R:stable_wide_gen}
Let $G$ be an infinite group. Left-generic definable subsets are wide for every left translation invariant  ideal $\II$ on the boolean algebra of all definable
subsets of $G$. Furthermore, if the definable subset $A$ is stable and wide for some indiscernibly prime left translation invariant $\emptyset$-invariant ideal $\II$ on the boolean algebra of all definable subsets of $G$, then $A$ is left-generic. Indeed, by \cite[Remark 5.17 (i)]{HP94}, it suffices to show that the formula $g\cdot A$ does not fork over $\emptyset$, which follows immediately from Remark \ref{R:wide_infinite}.
\end{remark}

We now observe that a slightly stronger result than \cite[Theorem 2.6]{ACG19}  
holds:

\begin{prop}\label{P:BD_stable}
Let $G$ be an infinite group  and $A$ a stable left-generic definable subset of $G$. Then there are an infinite definable subgroup $H$ of $G$ with $H\subseteq A\cdot A\inv$ as well as sequences $(b_n)_{n\in \mathbb N}$ in $H\subseteq A\cdot A\inv$ and $(c_n)_{n\in
\mathbb N}$ in $A$ such that $b_n\cdot c_m$ lies in $A$ for
all $n$ and $m$.
\end{prop}
In particular, stable wide sets have the productset property, by Remark \ref{R:stable_wide_gen}.
\begin{proof}
Let $\varphi(x,y)$ be the stable formula $A(y\cdot x)$, and choose by \cite[Claim in the proof of Lemma 5.16]{HP94} a generic
$\varphi$-type $p$, over the ambient model, containing
$A(x)=\varphi(x, \mathrm{id}_G)$. The $\varphi$-stabilizer $H\subset A\cdot A\inv$ of $p$ is
definable and  has finite index in $G$ by \cite[Lemma 5.16 (i)-(ii)]{HP94}. Therefore, the subgroup $H$ is infinite.

By stability of $\varphi$ and a standard Ramsey argument, we
need only show that there are infinite sequences $(b_n)_{n\in
\mathbb N}$ in $H\subseteq A\cdot A\inv$ and $(c_n)_{n\in 	\mathbb N}$ in $A$
such that $b_k\cdot c_m$ lies in $A$ if $k\leq m$. Suppose the
elements $b_0,\ldots, b_{n-1}$ and $c_0, \ldots,
c_{n-1}$ have already been constructed. Since $H$ is infinite,  choose $b_n$ in $H$
different from all $b_i$, with $0\leq i\leq n-1$. By definition of
the $\varphi$-stabilizer, the $\varphi$-type
$b_k\cdot p$ equals $p$ for all $0\leq k\leq n$. In particular, the $\varphi$-type $p$ contains the intersection $A\cap \bigcap_{k=0}^n b_k\inv\cdot A$, so this intersection cannot be finite. Choose therefore 
an element $c_n$ in $A\cap \bigcap_{k=0}^n b_k\inv\cdot A$ different
from all $c_i$, for $0\leq i\leq n-1$. By construction, the product $b_k\cdot c_n$ belongs to $A$ for $k\le n$, as desired.
\end{proof}
\begin{remark}\label{R:BD_stable}
An inspection of the above proof yields that it suffices to assume that the stable set $A$ contains a type $p$ whose $\varphi$-stabilizer is infinite. If the group is abelian, this is always the case if the infinite definable set $A$ is a boolean combination of weakly normal formulae by \cite[Lemma 2.4]{MPPW19}.

We do not know whether there are any further situations in which this weaker case can be applied. 
\end{remark}

By Corollary \ref{C:BD_wide}, a subset of positive upper Banach
density in an amenable group $G$ can be assumed to be definable and
wide with respect to a suitable ideal $\II$, after possibly expanding
the language. Consequently, we obtain the following immediate
observation and thank Gabe Conant for
pointing out a mistake in a previous version:

\begin{cor}\label{C:ACG_improved}
Every stable subset of positive upper Banach density in an infinite
	amenable group $G$ has the productset property for some infinite sets $B$
	and $C$, where $B$ lies in $A\cdot A\inv$ and
	$C$ is a subset of $A$.\qed
\end{cor}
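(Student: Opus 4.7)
The plan is to reduce directly to Proposition \ref{P:BD_stable}. Starting from a stable subset $A \subseteq G$ of positive upper Banach density, we will produce an appropriate ambient structure in which Proposition \ref{P:BD_stable} applies, and then read off the desired sequences.

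First I would invoke Corollary \ref{C:BD_wide}: after passing to a suitable expansion of the language (adding predicates $Q_{r,\varphi}$ as in the discussion before Remark \ref{R:wide_down_up}), we obtain an indiscernibly prime, left translation invariant, $\emptyset$-type-definable ideal $\II$ on the boolean algebra of all definable subsets of $G$, with respect to which $A$ is wide. Working inside a sufficiently saturated elementary extension $\UU$ of this expansion and taking $M$ to be a countable elementary substructure over which $A$ is defined, we are in the exact setup of Proposition \ref{P:BD_stable}, provided stability of $A$ transfers to the expansion. Since the formula $A(y\cdot x)$ lies already in the original language $\LL$ and stability of a formula only concerns sequences of tuples (which exist in the expansion iff they exist in the reduct), $A$ remains stable in the enriched structure.

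Now I would apply Proposition \ref{P:BD_stable} to the wide stable $M$-definable set $A$. This yields infinite sequences $(b_n)_{n\in \N}$ in $A(M)\cdot A(M)\inv$ and $(c_n)_{n\in \N}$ in $A(M)$ such that $b_n\cdot c_m\in A(M)$ for every pair $(n,m)$. Setting $B=\{b_n\mid n\in\N\}$ and $C=\{c_n\mid n\in\N\}$, both are infinite, $B\subseteq A\cdot A\inv$, $C\subseteq A$, and $B\cdot C\subseteq A$, which is the productset property in the required strong form.

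I do not expect a genuine obstacle: the only point that requires a moment of care is checking that the expansion used to make the Banach measure definable does not destroy the stability of $A$, but this is immediate from the fact that the relevant formula $A(y\cdot x)$ is preserved verbatim and stability is a property of formulas that is inherited from reducts. Everything else is a bookkeeping invocation of the already-established results.
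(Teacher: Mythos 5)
Your proposal is correct and follows the same route the paper takes: invoke Corollary~\ref{C:BD_wide} to obtain the suitable ideal after a language expansion, observe that stability is preserved, and apply Proposition~\ref{P:BD_stable}. The paper records this as an immediate consequence; your write-up merely spells out the bookkeeping (in particular, the preservation of stability under expansion), which the paper leaves implicit.
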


Nathanson used a result of Khazdan in order to prove in
\cite[Theorem
6]{mN80} that a set $A$ of positive upper density in the integers
contains subsets of the form \[ F_1+\ldots+F_n+B,\] where each $F_i$
is finite of size at least $m$ and $B$ has positive upper density, for
every $n$ and $m$ in $\N$. By Corollary \ref{C:BD_wide}, we can extend
his result to any (possibly non-abelian) amenable group $G$.

\begin{prop}\label{P:Nathanson}
  Let $G$ be an amenable group. Given any two natural numbers $n$ and
  $m$, and a subset $A$ of positive upper density, there are finite
  subsets $F_1,\ldots, F_n$ of $A$ of size $m$ and a subset $B$ of $A$
  of positive upper density such that
	 \[ F_1\cdots F_n \cdot B \subseteq A.\]
\end{prop}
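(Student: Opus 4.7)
The plan is to combine Hrushovski's stabilizer theorem with iterated applications of Corollary \ref{C:eq_wide} to prove a single-step lemma for $n = 1$, and then to deduce the general statement by induction on $n$.

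After expanding the language so that the measure from Fact \ref{F:Berg} is definable over $\emptyset$, Corollary \ref{C:BD_wide} makes $A$ wide with respect to an $\emptyset$-type-definable, indiscernibly prime, left-translation invariant ideal $\II$, and ensures that every wide finite intersection of translates of $A$ has positive upper density. Fix a wide complete type $p$ over an elementary submodel $M$ containing the formula $A(x)$, and let $H = \stab(p)$, which is type-definable and wide by Fact \ref{F:Hr}; by the same Fact, any wide complete type over $M$ concentrated on $H$ is contained in $\stt(p)$, so fix such a type $r$. The single-step lemma asserts that for every $m \in \N$ there exist $f_1, \ldots, f_m \in G(M)$ for which $A \cap \bigcap_{i=1}^m f_i^{-1} A$ is wide, hence of positive upper density. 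To establish it, choose realizations $a_1, \ldots, a_m$ of $r$ with each $\tp(a_i/M a_1 \cdots a_{i-1})$ wide (hence non-dividing over $M$ by Remark \ref{R:wide_infinite}). Since left-translation invariance of $\II$ implies that $\stt(p)$ is closed under inverses, each $a_i^{-1}$ belongs to $\stt(p)$, so $p(x) \cup a_i^{-1} p(x)$ is wide.

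Iterate Corollary \ref{C:eq_wide} applied to the partial type $\Phi_k(x, y_1, \ldots, y_k) = p(x) \cup y_1^{-1} p(x) \cup \cdots \cup y_k^{-1} p(x)$: at stage $k$, compare the tuples $(a_1, \ldots, a_k)$ and $(a_1, \ldots, a_{k-1}, a_{k+1})$, where $a_{k+1}$ is chosen to realize $\tp(a_k/M a_1 \cdots a_{k-1})$ and to have wide type over $M a_1 \cdots a_k$. This yields that $p(x) \cap \bigcap_{i=1}^m a_i^{-1} p(x)$ is wide, and therefore so is $A \cap \bigcap_{i=1}^m a_i^{-1} A$; by Remark \ref{R:wide_down_up}, the wideness descends to a tuple $(f_1, \ldots, f_m) \in G(M)^m$.

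The induction on $n$ is then routine. For $n = 0$, take $B = A$. For the inductive step, apply the single-step lemma to $A$ and $m$ to obtain $F_1 = \{f_1, \ldots, f_m\}$ together with the wide set $A^{(1)} := A \cap \bigcap_{f \in F_1} f^{-1} A$ of positive upper density satisfying $F_1 \cdot A^{(1)} \subseteq A$; apply the inductive hypothesis to $A^{(1)}$ to obtain $F_2, \ldots, F_n$ of size $m$ and $B \subseteq A^{(1)}$ of positive upper density with $F_2 \cdots F_n \cdot B \subseteq A^{(1)}$. Combining, $F_1 F_2 \cdots F_n \cdot B \subseteq F_1 \cdot A^{(1)} \subseteq A$. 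The technical heart lies in the iterative application of Corollary \ref{C:eq_wide}, which requires selecting $a_{k+1}$ to match the type of $a_k$ over $M a_1 \cdots a_{k-1}$ while remaining wide over $M a_1 \cdots a_k$; this relies on the fact that wide partial types admit wide complete extensions together with Remark \ref{R:wide_infinite}.
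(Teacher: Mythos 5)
There is a genuine gap: the proposition requires the finite sets $F_1,\ldots,F_n$ to be \emph{subsets of $A$}, but your construction does not deliver this. You choose the $a_i$ to realize a wide type $r$ concentrated on the identity coset of $\stab(p)$; such $a_i$ need not lie in $A$ (indeed, unless $p$ is not product-free, $p$ itself is not contained in $\stt(p)$, so the realizations of $r$ have no reason to realize $A$). Consequently, when you descend via Remark \ref{R:wide_down_up}, you only obtain $f_1,\ldots,f_m\in G(M)$, and $F_1=\{f_1,\ldots,f_m\}$ is not a subset of $A$, as you yourself write. The entire stabilizer-theorem machinery is pulling you away from the set $A$ rather than into it.

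There is also a technical flaw in the iterated application of Corollary \ref{C:eq_wide}. You compare the tuples $b=(a_1,\ldots,a_k)$ and $c=(a_1,\ldots,a_{k-1},a_{k+1})$, which share the first $k-1$ coordinates. Since $a_1,\ldots,a_{k-1}$ are non-algebraic over $M$ (they realize the wide, hence non-algebraic, type $r$), the type $\tp(c/Mb)$ contains formulas like $y_1=a_1$ and therefore \emph{divides} over $M$. Thus the hypothesis of Corollary \ref{C:eq_wide} is violated, and the step from $k$ to $k+1$ does not go through. (The legitimate way to iterate that corollary would double the number of translates at each step, by choosing an entirely fresh tuple $c\equiv_M b$ with $\tp(c/Mb)$ non-dividing, but that still does not place the translating elements inside $A$.)

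The paper's argument is considerably more direct and entirely avoids Fact \ref{F:Hr}: since $A$ is wide, it is infinite, so one picks $g_1,\ldots,g_m$ in $A(\UU)$ starting an indiscernible sequence. Indiscernible primeness of $\II$, together with left-translation invariance, gives directly that $\bigcap_{i=1}^m g_i^{-1}A$ is wide, and Remark \ref{R:wide_down_up} (conjoining the condition $A(y_i)$ to the defining formula $\theta$) yields $h_1,\ldots,h_m\in A$ in the model with $\bigcap_i h_i^{-1}A$ still wide, hence of positive density. Taking the $g_i$ \emph{inside} $A$ is exactly what produces $F_1\subseteq A$, and this is the step your proof omits.
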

It will follow from the proof that $B$ can be chosen to be a finite intersection of left translates of $A$.
\begin{proof}
  It suffices to show the above result for $n=1$. Fix $m$ some natural
  number. By Corollary \ref{C:BD_wide}, equip $G$ with an
  $\emptyset$-type-definable left translation invariant indiscernibly
  prime ideal $\II$ on the boolean algebra of all definable subsets of
  $G$ (in an appropriate language) with $A$ wide and
  $\emptyset$-definable, such that for any $g_1,\ldots,g_r$ in $G$, if
  the intersection $ \bigcap\limits_{i=1}^r (g_i\cdot A) $ is wide,
  then it has positive upper (Banach) density.

  Choose now $g_1,\ldots, g_m$ in $A(\UU)$ starting an indiscernible
  sequence. Since $\II$ is indiscernibly prime, the set
  $\bigcap\limits_{i=1}^m (g_i\inv\cdot A(\UU) ) $ is wide. By the
  Remark \ref{R:wide_down_up}, we may find elements $h_1,\ldots, h_m$
  in $A$ with $B(\UU)=\bigcap\limits_{i=1}^m (h_i\inv\cdot A(\UU) ) $
  wide. Since the latter intersection is definable over $G$, we
  conclude that $B$ has positive upper density (as a subset of $G$).
  Furthermore, notice that $F_1\cdot B\subseteq A $ with
  $F_1=\{h_1,\ldots, h_m\}$, which finishes the proof.
\end{proof}

We will finish this section with a result on the IP property for wide
types in ultra quasirandom groups, along the lines of the work of
Bergelson and Tao \cite{BT14}. In the aforementioned work, the authors
take over work of Gowers \cite{wG08} in the finitary context, in order
to prove striking results on deterministic mixing of progressions and
IP systems. We will prove a much weaker result, concentrating solely
on inclusion in a given (wide) set, yet with an elementary proof from
the model-theoretic point of view.

\begin{theorem}\label{T:IP_wide}
Let $G$ be a group equipped with a left translation
invariant $\emptyset$-type-definable indiscernibly prime ideal
$\II$ on the
boolean algebra of all definable
subsets of $G$, and $A$ be a wide subset of $G$ definable over an
elementary substructure $M$ of $\UU$. If no wide type containing $A$ is
product-free, then $A(M)$ contains an IP set.
\end{theorem}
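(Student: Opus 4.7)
The plan is to construct the IP sequence $(x_n)_{n\ge 1}\subseteq A(M)$ by induction, ensuring at each stage that a suitable intersection of left translates of $A$ remains wide, so that the construction can continue. For $n\ge 0$, let $S_n=\{x_{i_1}\cdots x_{i_k}\,|\,1\le i_1<\ldots<i_k\le n\}$ (with $S_0=\emptyset$) and consider the $M$-definable set $D_n=\bigcap_{s\in S_n\cup\{\mathrm{id}\}} s\inv\cdot A$. The inductive invariant I maintain is that the elements of $S_n$ are pairwise distinct and lie in $A(M)$, and that $D_n$ is wide. The case $n=0$ is immediate from the wideness of $A$. A direct computation shows that once $x_{n+1}\in D_n(M)$ has been chosen, the new partial products $S_{n+1}$ automatically lie in $A(M)$ and $D_{n+1}(y)=D_n(y)\wedge D_n(x_{n+1}\cdot y)$, so the task reduces to producing such an $x_{n+1}$ for which $D_{n+1}$ is again wide.

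For the inductive step I would extend $D_n$ to a wide complete type $q$ over $M$, which exists because $\II$ is $\emptyset$-type-definable. Since $q$ contains $A$, the hypothesis forces $q$ not to be product-free, and Lemma~\ref{L:prod_free} then yields $q\subseteq \stt(q)$. Picking any realization $c\models q$ in $\UU$, and using that $\stt(q)$ is closed under inverses (from left translation invariance of $\II$), the partial type $q(y)\cup q(c\cdot y)$ is wide; specializing to $D_n\in q$, the formula $\varphi(y,c):=D_n(y)\wedge D_n(c\cdot y)$ is wide. Remark~\ref{R:wide_down_up} then produces an $\LL_M$-formula $\theta(z)$, realized by $c$, such that $\varphi(y,c')$ remains wide for every $c'\models\theta$. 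Conjoining to $\theta$ the clauses $D_n(z)$ and $\bigwedge_{i\le n}(z\ne x_i)$ — both still satisfied by $c$, the second because $q$ has infinitely many realizations in $\UU$ by Remark~\ref{R:wide_infinite}, so $c$ may be chosen distinct from $x_1,\ldots,x_n$ — and invoking the elementarity of $M$, I pick $x_{n+1}\in\theta(M)$. This element has all the required properties, closing the induction and producing a sequence in $A(M)$ whose finite products all lie in $A(M)$.

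The main obstacle I anticipate is reconciling the semantic content of wideness, which is naturally witnessed by an element of $\UU$ realizing the wide type $q$, with the combinatorial requirement that the IP sequence live inside $M$. This is precisely the role of Remark~\ref{R:wide_down_up}: it transfers the wideness condition from parameters in $\UU$ to parameters in $M$, and its availability depends crucially on the $\emptyset$-type-definability of $\II$. The hypothesis that no wide type containing $A$ is product-free is essential since it applies uniformly at every stage of the induction, guaranteeing that each wide completion $q$ of $D_n$ satisfies $q\subseteq\stt(q)$.
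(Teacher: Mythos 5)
Your proof is correct and takes essentially the same approach as the paper: iteratively extend to a wide complete type containing the current intersection of translates, use that it is not product-free (so lies in its own $\stt$) to intersect with a further translate, and pull the translating element down into $M$ via Remark~\ref{R:wide_down_up} — your $D_n$ is precisely the paper's $A_n$ unwound as $\bigcap_{s\in S_n\cup\{\mathrm{id}\}}s\inv\cdot A$, and $D_{n+1}=D_n\cap x_{n+1}\inv\cdot D_n$ matches the paper's recursion $A_{k+1}=A_k\cap a_k\inv\cdot A_k$. One minor overclaim: your stated invariant that the elements of $S_n$ are pairwise distinct is not actually enforced (nothing prevents, say, $x_1=x_2x_3$), but the definition of an IP set only requires the generators $x_1,\ldots,x_n$ to be distinct, which your clause $\bigwedge_{i\le n}(z\neq x_i)$ does secure — a point the paper in fact leaves implicit, so here you are being more careful than the source.
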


\begin{proof}
  Let $p$ be a wide type over $M$ containing $A(x)$. Since $p$ is not
  product-free, Lemma \ref{L:prod_free} implies that $p$ belongs to
  $\stt(p)$.  Choose some $a_0$ realizing $p$ in $\UU$, so
  $p\cup a_0\inv \cdot p$ is wide.  In particular, the subset
  $A\cap a_0\inv \cdot A$ is wide. As the ideal $\II$ is
  type-definable over $\emptyset$, we may find $a_0$ in $A(M)$ with
  $A_1=A\cap a_0\inv \cdot A$ wide, by Remark
  \ref{R:wide_down_up}. Every wide type (over $M$) containing the
  $M$-definable set $A_1$ must contain $A$, so no wide type containing
  $A_1$ is product-free either. We can iterate this process to find
  sequences $(a_i)_{i\in\N}$ of elements of $A(M)$ and
  $(A_k)_{k\in \N}$ of $M$-definable wide subsets of $A$ such that
  \[ a_{k+1} \text{ belongs to } A_{k+1}= A_k\cap a_k\inv\cdot A_k.\]
  It is now easy to show by induction on $m$ that a finite product
  $a_{i_1} \cdots a_{i_m}$ belongs to
  $A_{i_1}$ when $i_1<\ldots<i_m$.  Thus, the set $A(M)$ contains all finite ordered
  products of the sequence $(a_i)_{i\in\N}$, and hence is IP, as
  desired.
\end{proof}
The above result and Corollary \ref{C:St-prod_free} yield the
immediate consequence.

\begin{cor}\label{C:wide_IP}
Let $G$ be a group equipped with a left translation
invariant $\emptyset$-type-definable indiscernibly prime ideal
$\II$ on the
boolean algebra of all definable
subsets of $G$. If $G=\GO M$ for some elementary substructure $M$ of $\UU$,
then every wide subset of $G$ contains an IP set.\qed
\end{cor}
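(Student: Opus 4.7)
The plan is to derive the corollary by chaining the two preceding results. Since $\II$ is type-definable over $\emptyset$, Corollary \ref{C:St-prod_free} applies in full; its condition (\ref{E:SomeCtd}) is exactly the hypothesis $G=\GO M$ of the present statement, and so it forces condition (\ref{E:Prod_free}): no wide type over any elementary submodel of $\UU$ is product-free.

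Now let $A$ be a wide (definable) subset of $G$. Choose an elementary submodel $M'$ of $\UU$ over which $A$ is definable. By the previous paragraph, no wide type over $M'$ containing $A(x)$ can be product-free. All hypotheses of Theorem \ref{T:IP_wide} are then satisfied, with $M'$ playing the role of $M$, and we conclude that $A(M')$ contains an IP set, and hence so does $A$.

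The only mild subtlety is that $A$ need not be definable over the particular submodel $M$ singled out in the hypothesis. This is handled automatically by the equivalences in Corollary \ref{C:St-prod_free}: the condition $G=\GO M$ for \emph{some} elementary submodel implies the absence of product-free wide types over \emph{every} elementary submodel, so we are free to switch to any convenient $M'$ containing the parameters defining $A$. Consequently, there is no real obstacle, and the proof is a direct application of the two cited results.
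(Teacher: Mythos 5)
Your proof is correct and follows exactly the paper's intended derivation: it chains Theorem \ref{T:IP_wide} with Corollary \ref{C:St-prod_free}, using the fact that $\II$ is $\emptyset$-type-definable to invoke the fourth (and from it the second) equivalent condition, and then applies the theorem over any elementary submodel over which the given wide set is definable. The subtlety you flag about switching submodels is real and correctly resolved.
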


Recall the following notions in \cite[Definition before Theorem
4.6]{wG08} \& \cite[Definition 31]{BT14}. A finite group $G$ is
$d$\emph{-quasirandom} if it has no non-trivial representations over
$\mathbb C$ of dimension strictly less than $d$. An \emph{ultra
  quasirandom group} is an ultraproduct of finite groups $G_n$ with
respect to a non-principal ultrafilter $\mathcal U$ such that for
every $d$ in $\N$, the collection of $d$-quasirandom $G_n$ is
$\mathcal U$-large. It was proved in \cite[Theorem 4.8]{dP19} that
ultra quasirandom groups are precisely the ultraproducts of finite
groups satisfying the conditions in Corollary
\ref{C:St-prod_free}.  Therefore, Corollary \ref{C:wide_IP} yields a
weaker version of \cite[Lemma 40]{BT14}.

\begin{cor}\label{C:ultra}
  Any internal set $A$ in a ultra quasirandom group $G$ of positive
  (nonstandard) normalised counting measure contains an IP set.\qed
\end{cor}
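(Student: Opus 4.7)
The plan is to realise Corollary \ref{C:ultra} as a direct instance of Corollary \ref{C:wide_IP}, with $\II$ the null ideal of the nonstandard normalised counting measure and the given internal set $A$ playing the role of the wide definable set. First, view the ultra quasirandom group $G=\prod_{\mathcal U}G_n$ as a nonstandard finite group in a language containing a predicate for every internal relation. The nonstandard normalised counting measure $\mu(X)=\lim_{\mathcal U}|X_n|/|G_n|$ is a finitely additive, left-translation invariant probability measure on the boolean algebra of internal subsets of $G$. Applying the language expansion procedure recalled just before Remark \ref{R:wide_down_up}, one may further assume that $\mu$ is $\emptyset$-definable. Its null ideal $\II$ is then $\emptyset$-type-definable, indiscernibly prime (because the total measure is $1$) and invariant under left translation, so all the standing assumptions of Corollary \ref{C:wide_IP} on $\II$ are met.

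The hypothesis that $A$ has positive nonstandard normalised counting measure translates exactly to $\mu(A)>0$, so $A$ is wide with respect to $\II$. Since $A$ is internal, it is definable in the expanded language, and by downward L\"owenheim--Skolem one may view $A$ as definable over a countable elementary substructure $M$ of $\UU$. By \cite[Theorem 4.8]{dP19}, an ultra quasirandom group satisfies the equivalent conditions of Corollary \ref{C:St-prod_free}; in particular $G=\GO M$ for every elementary $M$. All hypotheses of Corollary \ref{C:wide_IP} are now in place, and we conclude that $A$ contains an IP set.

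The only real obstacle is foundational rather than mathematical: one must carefully check that the nonstandard counting measure does yield a \emph{definable} Keisler measure in a suitable expansion of the language, and that an internal set remains definable in this language so that the parameters can be absorbed into an elementary submodel. Once these bookkeeping items are verified, the statement follows essentially by quoting Corollary \ref{C:wide_IP}.
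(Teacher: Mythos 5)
Your proposal is correct and matches the paper's own (implicit) argument: the paper states Corollary~\ref{C:ultra} as an immediate consequence of Corollary~\ref{C:wide_IP} together with \cite[Theorem 4.8]{dP19}, which shows that ultra quasirandom groups satisfy the equivalent conditions of Corollary~\ref{C:St-prod_free}, using the nonstandard normalised counting measure (made definable via the language expansion described before Remark~\ref{R:wide_down_up}) to produce the ideal $\II$. You have simply spelled out the bookkeeping the paper leaves to the reader.
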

A standard \L o\'s argument yields the following finitary version,
which is a non-quantitative weaker version of \cite[Theorem
5.3]{wG08}, with $A_F$ constant in Gowers's terminology.
\begin{cor}\label{C:quasi}
For every natural number $n$ and every $\epsilon>0$, there is some
$d=d(n,\epsilon)$ such that every finite subset $A$ in a $d$-quasirandom group
$G$ with $|A|\ge \epsilon |G|$ contains all possible products of a sequence of
length $n$.\qed
\end{cor}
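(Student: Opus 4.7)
The plan is to deduce Corollary \ref{C:quasi} from Corollary \ref{C:ultra} via a standard Łoś-type compactness argument. Suppose the corollary fails for some $n \in \N$ and some $\epsilon > 0$. Then for every $d \in \N$ one can find a $d$-quasirandom finite group $G_d$ together with a subset $A_d \subseteq G_d$ of density $|A_d|/|G_d| \geq \epsilon$, such that no sequence $a_1,\ldots,a_n$ in $G_d$ satisfies $a_{i_1}\cdots a_{i_k} \in A_d$ for every choice of indices $1 \leq i_1 < \cdots < i_k \leq n$.

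Fix a non-principal ultrafilter $\mathcal U$ on $\N$ and form the ultraproduct $G = \prod_{d \in \N} G_d / \mathcal U$ in the language of groups expanded by a unary predicate $P$ interpreted as $A_d$ in each factor. For any fixed threshold $d_0$, the cofinite set $\{d \in \N : d \geq d_0\}$ belongs to $\mathcal U$, and every such $G_d$ is in particular $d_0$-quasirandom; hence $G$ is an ultra quasirandom group in the sense of \cite[Definition 31]{BT14}. The internal set $A = P^G = [A_d]_{\mathcal U} \subseteq G$ has nonstandard normalised counting measure at least $\epsilon > 0$, so Corollary \ref{C:ultra} applies and $A$ contains an IP set.

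Truncating such an IP sequence after its first $n$ terms produces elements $\alpha_1,\ldots,\alpha_n \in A$ whose products $\alpha_{i_1}\cdots \alpha_{i_k}$ all lie in $A$ for $1 \leq i_1 < \cdots < i_k \leq n$. The existence of such an $n$-tuple is a \emph{single} first-order sentence in the expanded language, since it is an existential statement over $n$ variables together with a finite conjunction of $P$-membership clauses, one for each nonempty subset of $\{1,\ldots,n\}$. By Łoś's theorem the very same sentence then holds in $(G_d, A_d)$ for $\mathcal U$-almost every $d$, contradicting the initial choice of the $G_d$ and $A_d$. The only nontrivial input is Corollary \ref{C:ultra} itself; once one observes that the finitary configuration sought is first-order and uniformly bounded in complexity across all $d$, the compactness step is automatic, so the proof should be essentially routine.
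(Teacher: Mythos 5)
Your proof is correct and is precisely the ``standard \L o\'s argument'' that the paper invokes without detail. You correctly negate the statement, form the ultraproduct with a predicate for the $A_d$, verify that the resulting group is ultra quasirandom (using that $d$-quasirandomness implies $d_0$-quasirandomness for $d\geq d_0$ and that a non-principal ultrafilter contains all cofinite sets), apply Corollary \ref{C:ultra} to obtain an IP set, truncate to length $n$, and transfer the resulting existential sentence back by \L o\'s to reach a contradiction.
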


\end{document}